\numberwithin{equation}{section}
\numberwithin{figure}{section}
\numberwithin{table}{section}
\newenvironment{Ack}%
{\par \vspace{\baselineskip}%
 \noindent \textbf{Acknowledgements.}}%
{\par \vspace{\baselineskip}}
\newlist{clist}{enumerate}{1}
\setlist*[clist]{label=(\roman*), nosep}
\crefname{thm}{Theorem}{Theorems}
\crefname{cor}{Corollary}{Corollaries}
\crefname{dfn}{Definition}{Definitions}
\crefname{fct}{Fact}{Facts}
\crefname{lem}{Lemma}{Lemmas}
\crefname{prp}{Proposition}{Propositions}
\crefname{rmk}{Remark}{Remarks}
\crefname{figure}{Figure}{Figures}
\crefname{section}{\S\!}{\S\S\!}
\crefname{subsection}{\S\!}{\S\S\!}
\crefname{subsubsection}{\S\!}{\S\S\!}
\crefname{equation}{equation}{equations}
\theoremstyle{definition}
\newtheorem{thm}{Theorem}[subsection]
\newtheorem{dfn}[thm]{Definition}
\newtheorem{lem}[thm]{Lemma}
\newtheorem{prp}[thm]{Proposition}
\newtheorem{fct}[thm]{Fact}
\newtheorem{rmk}[thm]{Remark}
\newtheorem{q}{Question}
\newcommand{\ol}{\overline}
\newcommand{\ceq}{\coloneqq} 
\newcommand{\srj}{\twoheadrightarrow}
\newcommand{\lto}{\longrightarrow}
\newcommand{\mto}{\mapsto}
\newcommand{\linj}{\lhook\joinrel\longrightarrow}
\newcommand{\ep}{\epsilon}
\newcommand{\thf}{\tfrac{1}{2}}
\newcommand{\bbC}{\mathbb{C}}
\newcommand{\bbN}{\mathbb{N}}
\newcommand{\bbQ}{\mathbb{Q}}
\newcommand{\bbZ}{\mathbb{Z}}
\newcommand{\clP}{\mathcal{P}}
\newcommand{\frS}{\mathfrak{S}}
\newcommand{\Par}{\mathrm{Par}}
\newcommand{\SPar}{\mathrm{SPar}}
\DeclareMathOperator{\spf}{sp}
\DeclareMathOperator{\Pf}{Pf}
\DeclareMathOperator{\GL}{GL}
\DeclareMathOperator{\Sp}{Sp}
\DeclareMathOperator{\SpT}{STab}
\DeclareMathOperator{\QT}{QTab}
\newcommand{\abs}[1]{\left|{#1}\right|}
\newcommand{\br}[1]{\langle#1\rangle}
\newcommand{\sbr}[1]{\left\{#1\right\}}
\newcommand{\trs}[1]{{}^t \! {#1}}
\begin{document}

\title{Intermediate symplectic $Q$-functions}
\author{Shintarou Yanagida}
\date{2022.07.07}
\keywords{Schur's $Q$-functions, symplectic $Q$-functions, symmetric functions, J\'{o}zefiak-Pragacz formula, tableau-sum formula, Lindstr\"{o}m-Gessel-Viennot theorem}
\address{Graduate School of Mathematics, Nagoya University.
 Furocho, Chikusaku, Nagoya, Japan, 464-8602.}
\email{yanagida@math.nagoya-u.ac.jp}
\thanks{The author is supported by supported by 
 JSPS Grants-in-Aid for Scientific Research No.\ 19K03399.}

\begin{abstract}
We introduce an intermediate family of Laurent polynomials between Schur's $Q$-functions and S.\ Okada's symplectic $Q$-functions. It can also be regarded as a $Q$-function analogue of Proctor's intermediate symplectic characters, and is named the family of intermediate symplectic $Q$-functions. We also derive a tableau-sum formula and a J\'ozefiak-Pragacz-type Pfaffian formula of the Laurent polynomials.
\end{abstract}

\maketitle

{\small \tableofcontents}

\setcounter{section}{-1}
\section{Introduction}\label{s:0}

The theme of this brief note is the investigation of an \emph{intermediate} family between symmetric and symplectic polynomials, i.e., a family of polynomials sitting in the intermediate between the two families of polynomials, one of which is invariant under the action of the Weyl group of root system of type $A$, and another is invariant under the action of the Weyl group of type $C$. A typical example of such an intermediate family is Proctor's intermediate symplectic Schur polynomials \cite{P}, which was introduced as the characters of indecomposable representations of intermediate symplectic Lie groups. The resulting polynomial is denoted as 
\begin{align}\label{eq:0:isp}
 \spf^{(k,n-k)}_\lambda(x_1,\dotsc,x_n) \in 
 \bbZ[x_1^{\pm1},\dotsc,x_k^{\pm1},x_{k+1},\dotsc,x_n]^{W_k \times \frS_{n-k}},
\end{align}
where $k$ and $n$ are non-negative integers satisfying $k \le n$, $\lambda=(\lambda_1,\dotsc,\lambda_n)$ is a partition of length $\le n$ (see \cref{ss:0:ntn} below for the terminology), the first group of variables $(x_1,\dotsc,x_k)$ is of type $C$, acted by the Weyl group $W_k$ of type $C$,  and the second group $(x_{k+1},\dotsc,x_n)$ is of type $A$, acted by the symmetric group $\frS_{n-k}$. For the special values of $(k,n-k)$, we have
\[
 \spf^{(k,0)}_\lambda(x_1,\dotsc,x_k) = \spf_\lambda(x_1,\dotsc,x_k), \quad 
 \spf^{(0,n)}_\lambda(x_1,\dotsc,x_n) = s_\lambda(x_1,\dotsc,x_n),
\]
where $\spf_\lambda$ is the symplectic Schur polynomial and $s_\lambda$ is the Schur polynomial, respectively. See \cref{ss:is:s-ss} and \cref{ss:is:is} for the detail.

The first motivation of our study was to introduce a nice $q$- or $t$-analogue of the intermediate symplectic Schur polynomial which would be an intermediate version of the Macdonald polynomials of type $A$ and $C$ \cite{M}. 
Our trial failed in that direction, but remained alive in the direction to find a ``($t=-1$)-analogue''. To spell out, let us recall Schur's $Q$-function $Q_\lambda(x_1,\dotsc,x_n)$ \cite[III.8]{M95}. It is the specialization of the Hall-Littlewood polynomial $P_\lambda(x_1,\dotsc,x_n;t)$ \cite[III]{M95} at $t=-1$: 
\begin{align}\label{eq:0:Q}
 Q_\lambda(x_1,\dotsc,x_n) \ceq P_\lambda(x_1,\dotsc,x_n;-1) \in \bbZ[x_1,\dotsc,x_n]^{\frS_n}.
\end{align}
The specialization $t=-1$ implies that the polynomial $Q_\lambda$ is not zero only if $\lambda=(\lambda_1,\dotsc,\lambda_n)$ is a strict partition, i.e., a strictly decreasing sequence $\lambda_1>\lambda_2>\dotsb$.

The polynomial \eqref{eq:0:Q} is of course an object of type $A$. As for the corresponding object of type $C$, let us give a brief explanation on the recent work of S.\ Okada \cite{OsQ}. There he studied the specialization of Macdonald's zonal spherical polynomial of type $C$ \cite{M71}, denoted as $P^C_\lambda(x_1,\dotsc,x_k;t_s,t_l)$, at $t_s=t_l=-1$. Here we denoted the two parameters by $t_s$ and $t_l$, indicating that they are attached to the $W_k$-orbit of short roots and that of long roots in the root system of type $C$, respectively. The specialized Laurent polynomial is named the symplectic $Q$-function, which we denote as
\[
 Q^C_\lambda(x_1,\dotsc,x_k) \ceq P^C_\lambda(x_1,\dotsc,x_k;-1,-1) \in 
 \bbZ[x_1^{\pm1},\dotsc,x_k^{\pm 1}]^{W_k}.
\]
Here we only need to consider a strict partition $\lambda$ as in the case of \eqref{eq:0:Q}.

Now it is tempting to consider if there is an intermediate polynomial 
\begin{align}\label{eq:0:iQ}
 Q^{(k,n-k)}_\lambda(x_1,\dotsc,x_n) \in 
 \bbZ[x_1^{\pm1},\dotsc,x_k^{\pm1},x_{k+1},\dotsc,x_n]^{W_k \times \frS_{n-k}}
\end{align}
whose special cases recover Schur's and symplectic $Q$-functions as
\[
 Q^{(k,0)}_\lambda(x_1,\dotsc,x_k) = Q^C_\lambda(x_1,\dotsc,x_k), \quad 
 Q^{(0,n)}_\lambda(x_1,\dotsc,x_n) = Q_\lambda(x_1,\dotsc,x_n),
\]
so that it can be regarded as a $Q$-function analogue of the intermediate symplectic Schur polynomial \eqref{eq:0:isp}.

The purpose of this note is to explain that there exists such a family of polynomials \eqref{eq:0:iQ}, which we call \emph{the intermediate symplectic $Q$-polynomials}. A direct definition is given by a natural combination of tableau-sum formulas of Schur's $Q$- and symplectic $Q$-functions (see \cref{prp:iQ:iQ-T}). But we make a detour, and take the infinite-variable version as the definition.

Recall that Schur, Hall-Littlewood, Macdonald polynomials and Schur's Q-functions have their infinite-variable version, or the associated symmetric functions, as developed in \cite{M95}. The symplectic Schur polynomial and symplectic $Q$-function also have their infinite-variable version, realized by symmetric functions \cite{Oisp, OsQ}. We start the discussion to introduce \emph{the intermediate symplectic Schur function} $\spf^I_\lambda(X \mid Y)$, which is the infinite-variable version of the intermediate symplectic Schur polynomial \eqref{eq:0:isp}. It has two families $X=(x_1,x_2,\dotsc)$ and $Y=(y_1,y_2,\dotsc)$ of infinite variables. The $X$-variables correspond to the type $C$ variables, and the $Y$-variables to the type $A$ variables. See \cref{dfn:is:is} for the precise definition. The content in \cref{s:is} is a preliminary of the main \cref{s:iQ} of this note, and also serve complimentary material for Okada's paper \cite{Oisp} on an application of the intermediate symplectic Schur polynomials to the counting of shifted plane partitions of shifted double staircase shape.

After that, we introduce in \cref{dfn:iQ:iQf} \emph{the intermediate symplectic $Q$-function} $Q^I_\lambda(X \mid Y)$ which is the infinite-variable version of \eqref{eq:0:iQ}. We also introduce the skew diagram version $Q^I_{\lambda/\mu}(X \mid Y)$. Then we show in \cref{prp:iQ:iQ-T} that a particular specialization of variables $X$ and $Y$ yields a Laurent polynomial enjoying a tableau-sum formula, which is nothing but the finite-variable intermediate symplectic $Q$-polynomial \eqref{eq:0:iQ}. 

In the main \cref{thm:iQ:iJP}, we give a J\'ozefiak-Pragacz-type Pfaffian formula for the intermediate symplectic $Q$-polynomials \eqref{eq:0:iQ}:
\[
  Q^{(k,n-k)}_{\lambda/\mu}(x_1,\dotsc,x_n) = \Pf
 \begin{bmatrix} M^I_\lambda & N^I_{\lambda,\mu} \\ -\trs{N^I_{\lambda,\mu}} & O \end{bmatrix},
\] 
where $M^I_\lambda \ceq \bigl[ Q^{(k,n-k)}_{(\lambda_i,\lambda_j)}(x_1,\dotsc,x_n) \bigr]_{i,j=1}^l$is the matrix consisting of the intermediate symplectic $Q$-polynomials of two-row diagram $(\lambda_i,\lambda_j)$, and $N^I_{\lambda,\mu} \ceq \bigl[Q^{(k,n-k)}_{(\lambda_i-\mu_{m+1-j})}(x_1,\dotsc,x_n)\bigr]_{1 \le i \le l, 1 \le j \le m}$ consists of those of one-row diagram $(\lambda_i-\mu_{m+1-j})$. Our proof is an application of the Lindstr\"{o}m-Gessel-Viennot theorem \cite{GV,L} to a certain directed graph $\Gamma^{(k,n-k)}$ (see \cref{fig:iQ:LGV}). See the proof of  \cref{thm:iQ:iJP} for the detail.

\subsection{Notation and terminology}\label{ss:0:ntn}

Here is the list of notations used throughout the text.
\begin{enumerate}
\item
We denote by $\bbN \ceq \{0,1,2,\dotsc\}$ the set of non-negative integers.



\item 
For $n \in \bbZ_{>0}$, the symbol $\frS_n$ denotes the symmetric group of degree $n$.

\item
The symbol $[a_{ij}]_{i,j=1}^n$ denotes the square matrix of size $n$ 
with $(i,j)$-entry given by $a_{ij}$.




\end{enumerate}

We also use the terminology of partitions, Young diagram and Young tableaux 
in the sense of \cite[Chap.\ I]{M95}. In particular, we use the following terminology.
\begin{enumerate}[resume]
\item
A partition $\lambda=(\lambda_1,\lambda_2,\dotsc,\lambda_l)$ means a non-increasing finite sequence of non-negative integers. 
We identify two such sequences which differ only by a string of zeros at the end: $(\lambda_1,\dotsc,\lambda_l)=(\lambda_1,\dotsc,\lambda_l,0)=(\lambda_1,\dotsc,\lambda_l,0,\dotsc,0)$.
We denote by $\Par$ the set of all partitions.

\item
For a partition $\lambda=(\lambda_1,\lambda_2,\dotsc,\lambda_l)$, 
the non-zero entries $\lambda_i$ are called the parts of $\lambda$. 
The number of parts is called the length of $\lambda$ and denoted by $\ell(\lambda)$.
We also denote $\abs{\lambda} \ceq \lambda_1+\dotsb+\lambda_l$ and call it the weight of $\lambda$.
We denote by $\Par_n \subset \Par$ the subset of partitions of weight $n$.

\item
We use the abbreviation $(m^n) \ceq (\overbrace{m,m,\dotsc,m}^{\text{$n$ times}})$ for the iterated parts in a partition.

\item \label{i:ntn:strpar}
A partition $\lambda$ is called strict if all the parts are distinct.
We denote by $\SPar \subset \Par$ the subset consisting of all strict partitions.

\item \label{i:ntn:Y}
A partition $\lambda$ is identified with the corresponding Young diagram
\[
 \{(i,j) \in \bbZ^2 \mid 1 \le i \le \ell(\lambda), \ 1 \le j \le \lambda_i\},
\]
which is depicted by replacing the lattice points in $S(\lambda)$ with unit cells.

\item \label{i:ntn:trs}
For a partition $\lambda$, we denote by $\lambda'$ the transpose of $\lambda$.

\item
A skew diagram is a set-theoretic difference $\lambda/\mu \ceq \lambda-\mu$ of the Young diagrams corresponding to two partitions $\lambda$ and $\mu$.
\end{enumerate}

Finally, let us note:
\begin{enumerate}[resume]
\item \label{i:ntn:sf}
We follow the terminology on symmetric functions and symmetric polynomials in \cite[Chap.\ I]{M95}.  In particular, a symmetric polynomial means a finite-variable symmetric polynomial, and a symmetric function means an infinite-variable symmetric ``polynomial''. The precise definition will be briefly reviews in \cref{ss:is:biv}. We also use a non-standard terminology ``Schur's $Q$-polynomial'' to mean the finite-variable version of Schur's $Q$-function. See the paragraph of \eqref{eq:iQ:QAp} for the detail.
\end{enumerate}

\section{Intermediate symplectic Schur functions}\label{s:is}

\subsection{The ring of symmetric functions}\label{ss:is:biv}

Let us recall the ring of symmetric functions \cite[I.2]{M95}. For an infinite sequence $X=(x_1,x_2,\dotsc)$ of commuting independent variables, we denote by $\Lambda(X)$ the ring of symmetric functions with variables $X$ \emph{with coefficients in $\bbQ$}. We can regard it as the space of symmetric polynomials of infinite variables $X$, and roughly express its definition as 
\[
 \Lambda(X) = ``{\bbQ[x_1,x_2,\dotsc]^{\frS_\infty}}".
\]
Here is the precise description. For $n \in \bbZ_{>0}$, let $\Lambda^{(n)}(X) \ceq \bbQ[x_1,\dotsc,x_n]^{\frS_n}$ be the commutative $\bbQ$-algebra of $n$-variable symmetric polynomials, where each $\sigma \frS_n$ acts as $x_i \mto x_{\sigma(i)}$. We denote by $\Lambda^{(n)}(X) = \bigoplus_{d \in \bbN} \Lambda^{(n)}_d(X)$ the grading structure with respect to the degree given by $\deg x_i \ceq 1$ for each $i=1,2,\dotsc$. Then we have the projective system $\{\Lambda_n^d(X) \mid n \in \bbZ_{>0}\}$ for each $d \in \bbN$ with $\Lambda^{(n+1)}_d(X) \srj \Lambda^{(n)}_d(X)$ given by $x_{n+1} \mto 0$ and other $x_i$'s preserved. The projective limit is denoted by $\Lambda_d(X) \ceq \varprojlim_{n \to \infty} \Lambda^{(n)}_d(X)$, and the graded space 
\[
 \Lambda(X) \ceq \bigoplus_{d \in \bbN} \Lambda^d(X)
\]
has a natural structure of graded commutative $\bbQ$-algebra. This is the definition of $\Lambda(X)$. An element of $\Lambda(X)$ will be called a symmetric function of variable $X$.
Hereafter we suppress the symbol $X$ and denote $\Lambda \ceq \Lambda(X)$, $\Lambda_n \ceq \Lambda(X)$ and so on if no confusion may arise. 
We also denote $\Lambda_\bbZ$ the ring of symmetric functions with coefficients in $\bbZ$.

The projection to the ring of $n$-variable symmetric polynomials is denoted by 
\begin{align}\label{eq:is:piA}
 \pi^{(n)}_A\colon \Lambda(X) \lto \Lambda^{(n)}(X), \quad  
 \pi^{(n)}_A(x_i) = \begin{cases} x_i & (i \le n) \\ 0 & (n < i) \end{cases},
\end{align}
and sometimes called the ($n$-variable) truncation. We will also use another ring homomorphism 
\begin{align}\label{eq:is:piC}
 \pi_C^{(n)}\colon \Lambda(X) \lto \bbQ[x_1^{\pm1},\dotsc,x_n^{\pm1}]^{W_n}, \quad 
 \pi_C^{(n)}(x_i) \ceq \begin{cases} 
 x_i & (i \le n) \\ x_{i-n}^{-1} & (n < i \le 2n) \\ 0 & (2n < i) \end{cases}.
\end{align}
Here $W_n = \frS_n \ltimes (\bbZ/2\bbZ)^n$ denotes the Weyl group of the root system of type $C_n$, which acts on $x_i$'s by $\sigma(x_i)=x_{\sigma(i)}$ for $\sigma \in \frS_n$ and $\ep_i(x_j)=x_j$ ($j \ne i$), $\ep_i(x_i)=x_i^{-1}$ for $\ep_i = (0,\dotsc,0,\overset{i}{1},0,\dotsc,0) \in (\bbZ/2\bbZ)^n$. 

\begin{lem}\label{lem:is:piAC}
The families of ring homomorphisms $\{\pi^{(n)}_A \mid n \in \bbN\}$ and $\{\pi^{(n)}_C \mid n \in \bbN\}$ enjoy the following property.
\begin{enumerate}
\item 
If $f \in \Lambda$ satisfies $\pi^{(n)}_A(f)=0$ for any $n \gg 0$, then $f=0$.
\item
If $f \in \Lambda$ satisfies $\pi^{(n)}_C(f)=0$ for any $n \gg 0$, then $f=0$.
\end{enumerate}
\end{lem}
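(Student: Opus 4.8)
The plan is to prove (1) directly from the construction of $\Lambda$ as a projective limit, and then to deduce (2) from (1) by extracting, from $\pi^{(n)}_C(f)$, the ``nonnegative top-degree part'', which I claim recovers $\pi^{(n)}_A$ of the leading homogeneous component of $f$.

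First I would handle (1). Since $\pi^{(n)}_A$ maps $\Lambda_d$ into $\Lambda^{(n)}_d$, the images of the distinct homogeneous components of $f$ lie in distinct graded pieces of $\bbQ[x_1,\dotsc,x_n]$, so $\pi^{(n)}_A(f)=0$ forces $\pi^{(n)}_A(f_d)=0$ for each homogeneous component $f_d$; hence we may assume $f\in\Lambda_d$. Now recall $\Lambda_d=\varprojlim_n\Lambda^{(n)}_d$, that $\pi^{(n)}_A$ restricted to $\Lambda_d$ is exactly the canonical projection to the $n$-th term, and that the transition map $\Lambda^{(n)}_d\srj\Lambda^{(m)}_d$ (for $m\le n$) is the specialization sending $x_{m+1},\dotsc,x_n$ to $0$. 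Given that $\pi^{(n)}_A(f)=0$ for all $n\gg0$, for an arbitrary $m$ choose $n\ge m$ with $\pi^{(n)}_A(f)=0$ and apply the transition map to get $\pi^{(m)}_A(f)=0$; thus $\pi^{(m)}_A(f)=0$ for every $m$, i.e.\ $f=0$ in the projective limit.

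For (2), suppose $f\ne0$ and write $f=f_0+\dotsb+f_D$ with $f_D\ne0$. The key observation is that $\pi^{(n)}_C$ carries each degree-one generator $x_i$ to one of $x_j^{\pm1}$ or $0$, so $\pi^{(n)}_C(f_d)$ is a linear combination of Laurent monomials $x^\alpha=x_1^{\alpha_1}\dotsm x_n^{\alpha_n}$ with $\sum_i\abs{\alpha_i}\le d$; consequently the monomials of total degree $\sum_i\abs{\alpha_i}=D$ occurring in $\pi^{(n)}_C(f)$ come only from $\pi^{(n)}_C(f_D)$. I claim that, among these, the ones with all exponents $\alpha_i\ge0$ reassemble precisely to $\pi^{(n)}_A(f_D)$. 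Indeed, writing $f_D=\sum_\lambda c_\lambda m_\lambda$, one computes $\pi^{(n)}_C(m_\lambda)=\sum_\alpha\prod_{i=1}^n x_i^{\alpha_i-\alpha_{n+i}}$, the sum over distinct rearrangements $\alpha$ of $\lambda$ as a composition of length $2n$; a term $\prod_i x_i^{\beta_i}$ has $\sum_i\abs{\beta_i}=\abs{\lambda}$ with all $\beta_i\ge0$ if and only if $\alpha_{n+1}=\dotsb=\alpha_{2n}=0$, so that portion is exactly $m_\lambda(x_1,\dotsc,x_n)=\pi^{(n)}_A(m_\lambda)$. Therefore, extracting from the hypothesis $\pi^{(n)}_C(f)=0$ the monomials with $\sum_i\abs{\alpha_i}=D$ and all exponents $\ge0$ yields $\pi^{(n)}_A(f_D)=0$ for all $n\gg0$; by part (1) this gives $f_D=0$, a contradiction, so $f=0$.

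The hard part will be the verification inside (2) that the substitution $x_{n+j}\mapsto x_j^{-1}$ does not, after the cancellations $x_j x_j^{-1}=1$, spuriously create further monomials of top total degree with all nonnegative exponents — in other words, that the displayed characterization ``$\alpha_{n+1}=\dotsb=\alpha_{2n}=0$'' is exact and that each such monomial arises with coefficient $1$. Everything else is routine bookkeeping of degrees, and part (1) is the standard separation property of the inverse-limit description of $\Lambda$. I would write out in full only the computation of the nonnegative top-degree part of $\pi^{(n)}_C(m_\lambda)$.
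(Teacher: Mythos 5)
Your proposal is correct, and for part (2) it follows a genuinely different route from the paper. For (1) you and the paper do essentially the same thing: the paper invokes the universality of the projective limit $\Lambda_d=\varprojlim_n\Lambda^{(n)}_d$, and your argument simply writes out that separation property (reduce to a homogeneous $f$, then push a vanishing truncation down along the transition maps). For (2), however, the paper gives no argument at all: it cites \cite[Lemma 3.3]{OsQ}. You instead reduce (2) to (1) by extracting from $\pi^{(n)}_C(f)$ the part of top absolute degree with all exponents nonnegative, and this reduction is sound; moreover the step you flag as the remaining ``hard part'' is already settled by your own computation. Indeed, for a rearrangement $\alpha$ of $\lambda$ the contributed monomial $\prod_{i=1}^n x_i^{\alpha_i-\alpha_{n+i}}$ has absolute degree $\sum_i\abs{\alpha_i-\alpha_{n+i}}\le\sum_i(\alpha_i+\alpha_{n+i})=\abs{\lambda}$, so the cancellations $x_ix_i^{-1}=1$ can only lower absolute degree and never create a spurious top-degree term; and if all $\beta_i=\alpha_i-\alpha_{n+i}$ are $\ge0$ with $\sum_i\beta_i=\abs{\lambda}$, then $\sum_i\alpha_{n+i}=0$, i.e.\ $\alpha_{n+1}=\dotsb=\alpha_{2n}=0$, while distinct such $\alpha$ yield distinct monomials, each with coefficient $1$. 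To make ``extracting'' precise, apply the linear projection of $\bbQ[x_1^{\pm1},\dotsc,x_n^{\pm1}]$ onto the span of the monomials $x^\beta$ with $\beta_i\ge0$ and $\sum_i\beta_i=D$ to the identity $\pi^{(n)}_C(f)=0$; by the above this projection of $\pi^{(n)}_C(f)$ equals $\pi^{(n)}_A(f_D)$, so $\pi^{(n)}_A(f_D)=0$ for $n\gg0$ and (1) gives $f_D=0$, a contradiction. As for what each approach buys: the paper's citation is shorter and defers to Okada's lemma, which was proved for exactly this setting; your argument keeps the statement self-contained, works directly with the monomial basis, and makes transparent \emph{why} the type-$C$ truncations separate points of $\Lambda$ --- the nonnegative top-degree part of $\pi^{(n)}_C(f)$ remembers $\pi^{(n)}_A$ of the leading homogeneous component of $f$.
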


\begin{proof}
(1) is a consequence of the universality of the projective limit $\Lambda$.
(2) is proved in \cite[Lemma 3.3]{OsQ}.
\end{proof}

Finally, we introduce a bivariate version of $\Lambda(X)$. Let $X=(x_1,x_2,\dotsc)$ and $Y=(y_1,y_2,\dotsc)$ be two sequences of commuting independent variables. We define
\begin{align}\label{eq:is:LamXY}
 \Lambda(X \mid Y) \ceq \Lambda(X) \otimes \Lambda(Y),
\end{align}
where $\otimes$ denotes the tensor product over $\bbQ$ of graded commutative $\bbQ$-algebras.
Thus, $\Lambda(X \mid Y)$ is a graded commutative ring with the grading structure
\[
 \Lambda(X \mid Y) = \bigoplus_{c \in \bbN} \Lambda^c(X \mid Y), \quad 
 \Lambda^c(X \mid Y) = \bigoplus_{\substack{d,e \in \bbN \\ d+e=c}} \Lambda^{d,e}(X \mid Y), 
 \quad \Lambda^{d,e}(X \mid Y) \ceq \Lambda^d(X) \otimes_{\bbQ} \Lambda^e(Y),
\]
where $\otimes_\bbQ$ denotes the ordinary tensor product of linear spaces over $\bbQ$.
Thus, given a basis $\{P_\lambda(X) \mid \lambda\colon \text{partitions}\}$ of $\Lambda(X)$ and another $\{Q_\lambda(Y) \mid \lambda\colon \text{partitions}\}$ of $\Lambda(Y)$, we have the tensor product basis $\{P_\lambda(X) \otimes Q_\mu(Y) \mid \lambda,\mu\colon \text{partitions}\}$ of $\Lambda(X \mid Y)$.

Let us also introduce the tensor product of the ring homomorphisms $\pi^{(n)}_C$ and $\pi^{(n)}_A$. 
\begin{dfn}\label{dfn:is:piI}
Let $k,n \in \bbN$ with $k \le n$, and define a ring homomorphism 
\[
 \pi^{(k,n-k)}\colon \Lambda(X \mid Y) \lto 
 \bbQ[x_1^{\pm 1},\dotsc,x_k^{\pm 1},x_{k+1},\dotsc,x_n]^{W_k \times \frS_{n-k}}
\]
by setting
\[
 \pi^{(k,n-k)}(x_i) \ceq \begin{cases} 
 x_i & (i \le k) \\ x_{i-k}^{-1} & (k < i \le 2k) \\ 0 & (2k < i) \end{cases}, \quad 
 \pi^{(k,n-k)}(y_j) \ceq \begin{cases} 
 x_{k+j} & (j \le n-k) \\ 0 & (n-k < j) \end{cases}.
\]
\end{dfn}

Using \cref{lem:is:piAC}, we can immediately show the following statement. We omit the proof.

\begin{lem}\label{lem:is:piI}
If $f \in \Lambda(X \mid Y)$ satisfies $\pi^{(k,n-k)}(f)=0$ for any $k,n \in \bbN$ satisfying $k,n-k \gg 0$, then we have $f=0$.
\end{lem}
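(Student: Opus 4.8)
The plan is to reduce the bivariate statement of \cref{lem:is:piI} to the two univariate statements of \cref{lem:is:piAC} by working one tensor factor at a time. First I would fix $f \in \Lambda(X \mid Y)$ with $\pi^{(k,n-k)}(f) = 0$ for all $k, n-k \gg 0$, and expand $f$ in a tensor-product basis: writing $f = \sum_\mu g_\mu \otimes P_\mu(Y)$, where $\{P_\mu(Y)\}$ is a basis of $\Lambda(Y)$ (say the monomial symmetric functions $m_\mu$) and $g_\mu \in \Lambda(X)$, with only finitely many $g_\mu$ nonzero in each degree. Since $f$ lives in a fixed total degree (or we argue degree by degree), the sum is finite.

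Next I would exploit the product structure of $\pi^{(k,n-k)}$: by \cref{dfn:is:piI}, $\pi^{(k,n-k)}$ acts on the $X$-variables exactly as $\pi^{(k)}_C$ and on the $Y$-variables exactly as $\pi^{(n-k)}_A$ (with target variables renamed $y_j \mapsto x_{k+j}$), and the two variable sets are disjoint in the target. Hence $\pi^{(k,n-k)}(f) = \sum_\mu \pi^{(k)}_C(g_\mu) \cdot \pi^{(n-k)}_A(P_\mu(Y))$. Now I would first hold $k$ fixed (large) and let $n-k \to \infty$: for $n-k$ large enough, the polynomials $\pi^{(n-k)}_A(m_\mu)$ over the finitely many relevant $\mu$ are linearly independent over $\bbQ[x_1^{\pm1},\dots,x_k^{\pm1}]^{W_k}$ — indeed the $m_\mu(x_{k+1},\dots,x_n)$ are linearly independent symmetric polynomials once $n-k \ge \ell(\mu)$ for all $\mu$ appearing, and they involve only the $x_{k+1},\dots,x_n$ variables, which are algebraically independent from the first block. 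Therefore $\pi^{(k)}_C(g_\mu) = 0$ for every $\mu$ and every sufficiently large $k$. Applying \cref{lem:is:piAC}(2) to each fixed $\mu$ then gives $g_\mu = 0$, whence $f = 0$.

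The main obstacle is making the linear-independence step clean: one must be careful that ``$k, n-k \gg 0$'' supplies enough values of $n-k$ (for each fixed large $k$) to separate the finitely many $\mu$'s, and that the separation takes place over the coefficient ring $\bbQ[x_1^{\pm1},\dots,x_k^{\pm1}]^{W_k}$ rather than just over $\bbQ$. This is where the disjointness of the two variable blocks in the target of $\pi^{(k,n-k)}$ — and the fact that $\pi^{(n-k)}_A$ genuinely lands in polynomials in the \emph{fresh} variables $x_{k+1},\dots,x_n$ — is doing the real work; it lets one treat $\pi^{(k)}_C(g_\mu)$ as ``scalars'' relative to the $Y$-block. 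An alternative, perhaps slicker, route is to iterate the abstract principle behind \cref{lem:is:piAC}(1): realize $\Lambda(X\mid Y)$ as a projective limit in the $Y$-direction to first kill the $Y$-dependence via stabilization (using part (1) for the $A$-side, since $\pi^{(n-k)}_A$ is literally a truncation), reducing to $f \in \Lambda(X) \otimes \bbQ$, and then invoke part (2) directly. Either way the proof is short, which is consistent with the paper's remark that it omits it; I would include the basis-expansion version as it is the most transparent.
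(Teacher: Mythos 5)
Your proposal is correct, and it is essentially the argument the paper has in mind: the paper omits the proof precisely because, as you do, one expands $f$ in a tensor-product basis, uses that $\pi^{(k,n-k)}$ acts as $\pi^{(k)}_C$ on the $X$-block and as a renamed $\pi^{(n-k)}_A$ on the $Y$-block with the two blocks algebraically independent in the target, and then invokes \cref{lem:is:piAC} to kill each coefficient. Your linear-independence step (finitely many $\mu$, choose $n-k$ exceeding all $\ell(\mu)$, independence of the $m_\mu(x_{k+1},\dotsc,x_n)$ over the first block) is exactly the routine verification being suppressed, so the proof stands as written.
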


\subsection{Schur and symplectic Schur functions}\label{ss:is:s-ss}

Here we recall some classical bases of $\Lambda$ referring to \cite[I.2, I.3]{M95} for the detail. The following bases will be used:
\[
 \{s_\lambda \mid \lambda \in \Par\}, \quad 
 \{h_\lambda \mid \lambda \in \Par\}, \quad 
 \{e_\lambda \mid \lambda \in \Par\}, \quad 
 \{p_\lambda \mid \lambda \in \Par\},
\]
the family of Schur, completely homogeneous, elementary and power-sum symmetric functions, respectively. Let us first recall the relations 
\begin{align*}
 s_{(n)}(X) = h_n(X) \ceq 
 \sum_{1 \le i_1 \le i_2 \le \dotsb \le i_n} x_{i_1} x_{i_2} \dotsb x_{i_n}, \\
 s_{(1^n)}(X) = e_n(X) \ceq 
 \sum_{1 \le i_1 < i_2 < \dotsb < i_n} x_{i_1} x_{i_2} \dotsb x_{i_n}
\end{align*}
for each $n \in \bbN$. 
These relations are extended to the Jacobi-Trudi formulas \cite[(3.4), (3.5)]{M95}:
\begin{align}\label{eq:is:JT}
 s_\lambda(X) = \det[h_{\lambda_i -i+j}]_{i,j=1}^{\ell(\lambda)} 
              = \det[e_{\lambda_i'-i+j}]_{i,j=1}^{\ell(\lambda')},
\end{align}
where $\lambda'$ denotes the transpose of $\lambda$ (see \cref{ss:0:ntn} \ref{i:ntn:trs}). 

Next, recall the definitions $h_\lambda \ceq h_{\lambda_1} h_{\lambda_2} \dotsb h_{\lambda_l}$ and $e_\lambda \ceq e_{\lambda_1} e_{\lambda_2} \dotsb e_{\lambda_l}$ for a partition $\lambda=(\lambda_1,\dotsc,\lambda_l)$. As for the power-sum, we have $p_n(X) \ceq \sum_{i \ge 1}x_i^n$ for $n \in \bbN$ and $p_\lambda \ceq p_{\lambda_1} p_{\lambda_2} \dotsb p_{\lambda_l}$ for a partition $\lambda$. Let us also recall that $s_\lambda$'s, $h_\lambda$'s and $e_\lambda$'s are actually bases of the free module $\Lambda_\bbZ$. Finally, note that $p_\lambda$'s form a basis of $\Lambda$ since it is defined over $\bbQ$.

Now let us recall the representation-theoretic fact that the characters of irreducible polynomial modules of the general linear group $\GL_n$ over $\bbC$ are given by 
\[
 \bigl\{\pi^{(n)}_A(s_\lambda) \in \Lambda^{(n)}(X) \mid 
   \lambda \in \Par, \ \ell(\lambda) \le n\bigr\}.
\]
We call $s_\lambda(x_1,\dotsc,x_n) \ceq \pi^{(n)}_A(s_\lambda)$ the Schur polynomial as usual. 

Let us also recall the Schur functions for skew diagrams \cite[I.5]{M95}. There are several equivalent definitions, and here we only show the one extending the Jacobi-Trudi formulas \eqref{eq:is:JT}: For any pair $(\lambda, \mu)$ of partitions, we have $s_{\lambda/\mu} \in \Lambda$ satisfying the equalities 
\[
 s_{\lambda/\mu} = \det[h_{\lambda_i -\mu_j -i+j}]_{i,j=1}^n 
                 = \det[e_{\lambda'_i-\mu'_j-i+j}]_{i,j=1}^m 
\]
with $n \ge \max\{\ell(\lambda),\ell(\mu)\}$ and $m \ge \max\{\ell(\lambda'),\ell(\mu')\}$.
We also used the convention $h_r \ceq 0$ for $r<0$.
We can recover the ordinary Schur functions as $s_\lambda = s_{\lambda/\emptyset}$.
The skew Schur polynomial is defined to be 
\begin{align}\label{eq:is:ssp}
 s_{\lambda/\mu}(x_1,\dotsc,x_n) \ceq \pi^{(n)}_A(s_{\lambda/\mu}) \in 
 \Lambda^{(n)} = \bbZ[x_1,\dotsc,x_n]^{\frS_n}.
\end{align}
If $\lambda$ and $\mu$ satisfy $\lambda \supset \mu$ and $\ell(\lambda) \le n$,
then the skew Schur polynomial has the following tableau-sum formula:
\begin{align}\label{eq:is:SST}
 s_{\lambda/\mu}(x_1,\dotsc,x_n) = \sum_{T \in \SpT^{0,n}(\lambda/\mu)} x^T.
\end{align}
Here $\SpT^{0,n}(\theta)$ denotes the set of all semi-standard tableaux of shape $\theta$ with entries from the totally ordered set $\{1<2<\dotsb<n\}$. See \cite[I.5, (5.12)]{M95} for the detail, and also \cref{dfn:is:SpT} for a generalization.

Finally we recall the symplectic Schur functions \cite[Definition 2.1.1]{KT}. For a partition $\lambda$, we define 
\begin{align*}
 s^C_\lambda \ceq 
 \frac{1}{2}\det[h_{\lambda_i-i+j}+h_{\lambda_i-i-j+2}]_{i,j=1}^{\ell(\lambda)} 
 \in \Lambda.
\end{align*} 
Note that the $(i,1)$-entry of the matrix is $2 h_{\lambda_i}$, and we actually have $s^C_\lambda \in \Lambda_\bbZ$. By the ring homomorphism $\pi_C^{(n)}\colon \Lambda(X) \to \bbQ[x_1^{\pm1},\dotsc,x_n^{\pm1}]^{W_n}$ in \eqref{eq:is:piC}, we obtain the symplectic Schur polynomials
\begin{align}\label{eq:is:sCp}
 s^C_\lambda(x_1,\dotsc,x_n) = \pi^{(n)}_C(s^C_\lambda).
\end{align}
More generally, we have the skew symplectic Schur polynomials $s^C_{\lambda/\mu}(x_1, \dotsc,x_n)$ for partitions $\lambda$ and $\mu$. If $\lambda \supset \mu$ and $\ell(\lambda) \le n$, then we have the tableaux formula
\begin{align}\label{eq:is:CSpT}
 s^C_{\lambda/\mu}(x_1,\dotsc,x_n) = \sum_{T \in \SpT^{n,0}(\lambda/\mu)} x^T.
\end{align}
Here $\SpT^{n,0}(\theta)$ denotes the set of all symplectic tableaux of shape $\theta$ introduced by King \cite{K}.  See \cref{dfn:is:SpT} below for an explanation. 

The symplectic Schur function $s_\lambda^C \in \Lambda$ is a infinite-variable version of the irreducible character of the symplectic group in the following sense. 
Recall the ring homomorphism $\pi_C^{(n)}\colon \Lambda(X) \to \bbQ[x_1^{\pm1},\dotsc,x_n^{\pm1}]^{W_n}$ in \eqref{eq:is:piC}. Then the characters of irreducible rational module of the symplectic group $\Sp_{2n}$ over $\bbC$ are given by 
\[
 \bigl\{\pi_C^{(n)}(s_\lambda^C) \mid \lambda \in \Par, \ \ell(\lambda) \le n\bigr\}.
\]

\subsection{Intermediate symplectic Schur functions}\label{ss:is:is}

Let us continue to use the symbols in the previous \cref{ss:is:biv}. In particular, $X=(x_1,x_2,\dotsc)$ and $Y=(y_1,y_2,\dotsc)$ denote the infinite sequences of independent variables, and $\Lambda(X \mid Y)$  denotes the ring of bivariant symmetric functions \eqref{eq:is:LamXY}. In this subsection, we introduce the family of elements of $\Lambda(X \mid Y)$ of lifting the intermediate symplectic characters. 

\begin{dfn}\label{dfn:is:is}
For each partition $\lambda$, 
we define an element $s^I_{\lambda}(X \mid Y) \in \Lambda(X \mid Y)$ by
\[
 s^I_\lambda(X \mid Y) \ceq \sum_{\substack{\mu\colon \text{partitions} \\ \mu \subset \lambda}}
 s^C_\mu(X) s_{\lambda/\mu}(Y),
\]
and call it \emph{the intermediate symplectic Schur function}.
\end{dfn}

The name of $s^I_\lambda(X|Y)$ originates in the following \cref{prp:is:isp}. To state it, we need several preparation. Let us first recall the algebraic group $\Sp_{2k,n-k}$ introduced by Proctor \cite{P}. 

\begin{dfn}[{\cite{P}}]
Let $k,n \in \bbN$ with $k \le n$, and $V$ be the $n$-dimensional complex linear space $V \ceq \bigoplus_{i=1}^k (\bbC e_i \oplus \bbC e_{\ol{i}}) \oplus \bigoplus_{j=k+1}^n \bbC e_j$. Let $\br{\ ,\ }$ be the (possibly degenerate) skew-symmetric bilinear form on $V$ defined by
\begin{align*}
 \br{e_\alpha,e_{\beta}} \ceq 
 \begin{cases}
  1 & \quad (\alpha=i,      \, \beta=\ol{i}, \, 1\le i \le k) \\
 -1 & \quad (\alpha=\ol{i}, \, \beta=i,      \, 1\le i \le k) \\
  0 & \quad (\text{otherwise})
 \end{cases}.
\end{align*}
Then the algebraic group $\Sp_{2k,n-k}$ is defined by
\begin{align}
 \Sp_{2k,n-k} \ceq \sbr{g\in\GL_{n+k} \mid \forall v,w \in V, \ \br{gv,gw}=\br{v,w}}.
\end{align}
Note that if $k=n$ or $0$, we have the following isomorphisms of algebraic groups, respectively.
\begin{align*}
 \Sp_{2n,0} \cong \Sp_{2n}, \quad \Sp_{0,n} \cong \GL_n.
\end{align*}
We call it \emph{the intermediate symplectic group}.
\end{dfn} 

By \cite{P}, finite-dimensional indecomposable weight module of $\Sp_{2k,n-k}$ are parametrized by $\Par$. We denote the character of the indecomposable module $V_\lambda$ corresponding to $\lambda \in \Par$ by 
\begin{align}\label{eq:is:spf}
 \spf^{(k,n-k)}_\lambda(x_1,\dotsc,x_n) \in 
 \bbZ[x_1^{\pm1},\dotsc,x_k^{\pm1},x_{k+1},\dotsc,x_n]^{W_k \times \frS_{n-k}}.
\end{align}
and call it the intermediate symplectic character. It has a tableau-sum formula explained in \cref{fct:is:ISpT} below.

\begin{dfn}[{\cite{P}, \cite[Definition 2.1]{Oisp}}]\label{dfn:is:SpT}
Let $k,n \in \bbN$ with $k \le n$, and $\theta$ be a skew diagram with $\ell(\theta) \le n$. A $(k,n-k)$-symplectic tableau of shape $\theta$ is a filling of the cells of $\theta$ with entries from the totally ordered set 
\[
 \{1<\ol{1}<2<\ol{2}<\cdots<k<\ol{k}<k+1<\dots<n\}
\]
satisfying the following rules.
\begin{enumerate}[label=(ST\arabic*)]
 \item the entries in each row weakly increasing from left to right;
 \item the entries in each column strictly increasing from top to bottom.
 \item the entries in the $i$-th row are greater than or equal to $i$ for each $i=1,\dotsc,n$.
\end{enumerate}
We denote by $\SpT^{(k,n-k)}(\theta)$ 
the set of all $(k,n-k)$-symplectic tableaux of shape $\theta$.
\end{dfn}

Note that $\SpT^{(0,n)}(\theta)$ is equal to the set of semi-standard tableaux (c.f.\ \eqref{eq:is:SST}), and $\SpT^{(n,0)}(\theta)$ is equal to the set of symplectic tableaux (c.f.\ \eqref{eq:is:CSpT}).

\begin{fct}[{\cite{P}}]\label{fct:is:ISpT}
Let $k,n \in \bbN$ with $\lambda$ be a partition with $\ell(\lambda) \le n$. Then the intermediate symplectic character \eqref{eq:is:spf} has the presentation 
\[
 \spf^{(k,n-k)}_{\lambda}(x_1,\dotsc,x_n) = \sum_{T \in \SpT^{(k,n-k)}(\lambda)} x^T
\]
with $x^T \ceq \prod_{i=1}^k x_i^{\#\{\text{$i$'s in $T$}\}-\#\{\text{$\ol{i}$'s in $T$}\}}\prod_{i=k+1}^n x_i^{\#\{\text{$i$'s in $T$}\}}$.
\end{fct}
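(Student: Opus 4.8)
The statement is Proctor's theorem \cite{P}, so the most economical proof is simply to quote \cite{P}. For a self-contained argument I would proceed in two stages: a purely combinatorial decomposition of $(k,n-k)$-symplectic tableaux, which reduces the asserted sum to the already-known tableau formulas \eqref{eq:is:CSpT} and \eqref{eq:is:SST}, followed by the representation-theoretic identification of the resulting symmetric expression with the character of $V_\lambda$.

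\textbf{Combinatorial step.} Given $T \in \SpT^{(k,n-k)}(\lambda)$, let $\mu$ be the set of cells of $T$ whose entry lies in the sub-alphabet $\{1<\ol1<\dots<k<\ol k\}$. By the row-weak and column-strict rules (ST1)--(ST2), $\mu$ is a Young subdiagram of $\lambda$, and by (ST3) we have $\ell(\mu)\le k$, since an entry in row $i>k$ is $\ge i>\ol k$. The restriction $T|_\mu$ is then a symplectic (King) tableau of shape $\mu$, i.e.\ an element of $\SpT^{(k,0)}(\mu)$, while $T|_{\lambda/\mu}$ is a column-strict filling of $\lambda/\mu$ with entries in $\{k+1<\dots<n\}$; one checks that (ST3) holds automatically on $\lambda/\mu$, precisely because $\ell(\mu)\le k$ forces the entry in cell $(i,1)$ of $\lambda/\mu$ with $i>k$ to be $\ge (k+1)+(i-\ell(\mu)-1)\ge i$. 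Conversely any such pair glues back to an element of $\SpT^{(k,n-k)}(\lambda)$, because across the $\mu$/$(\lambda/\mu)$ boundary every entry $\le\ol k$ precedes every entry $\ge k+1$. Since $x^T=x^{T|_\mu}\,x^{T|_{\lambda/\mu}}$ with the two factors in disjoint variables, summing and applying \eqref{eq:is:CSpT} to $\mu$ and \eqref{eq:is:SST} to $\lambda/\mu$ (after relabelling $\{k+1,\dots,n\}$ as $\{1,\dots,n-k\}$) gives
\[
 \sum_{T\in\SpT^{(k,n-k)}(\lambda)}x^T \;=\; \sum_{\substack{\mu\subseteq\lambda\\ \ell(\mu)\le k}} s^C_\mu(x_1,\dots,x_k)\,s_{\lambda/\mu}(x_{k+1},\dots,x_n).
\]
Using the classical vanishing $s^C_\mu(x_1,\dots,x_k)=0$ for $\ell(\mu)>k$ (equivalently $\SpT^{(k,0)}(\mu)=\emptyset$ there), the sum may be taken over all $\mu\subseteq\lambda$, and the right-hand side is exactly $\pi^{(k,n-k)}\bigl(s^I_\lambda(X\mid Y)\bigr)$ by \cref{dfn:is:is} and \cref{dfn:is:piI}.

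\textbf{Identification step.} It remains to match $\pi^{(k,n-k)}(s^I_\lambda(X\mid Y))$ with the character of the indecomposable $\Sp_{2k,n-k}$-module $V_\lambda$. One argues by induction on $n-k$: the base case $n=k$ is King's symplectic character formula, and the inductive step is the branching rule for the chain $\Sp_{2k,n-k}\supset\Sp_{2k,n-k-1}$, which matches the Pieri-type recursion $s_{\lambda/\mu}(x_{k+1},\dots,x_n)=\sum_\nu s_{\lambda/\nu}(x_{k+1},\dots,x_{n-1})\,x_n^{\abs{\nu/\mu}}$ on the $\GL$-side of \cref{dfn:is:is}. This branching rule is the main obstacle: proving it directly requires control of the (non-reductive) structure of $\Sp_{2k,n-k}$ and of Proctor's construction of $V_\lambda$, which is the substance of \cite{P}; for the purposes of this note it is cleanest to invoke that reference for this last step.
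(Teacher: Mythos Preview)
The paper does not prove this statement at all: it is recorded as a \emph{Fact} and attributed to Proctor \cite{P}, with no argument given. Your opening sentence already matches the paper exactly, so on that level there is nothing to compare.

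Your additional sketch is sound but worth one remark. The combinatorial decomposition you carry out in the first step is precisely the argument the paper gives for \cref{prp:is:isp}, only run in the opposite direction: there the author starts from $\pi^{(k,n-k)}(s^I_\lambda)$, expands via \eqref{eq:is:SST} and \eqref{eq:is:CSpT}, and then invokes the present Fact to conclude. Your version instead starts from the tableau sum, reaches $\pi^{(k,n-k)}(s^I_\lambda)$ by the same bijection, and then needs a separate argument (your ``identification step'') to equate this with the character of $V_\lambda$. That last step is genuinely the content of \cite{P}, as you correctly say; the branching-rule induction you outline is the right shape, but making it rigorous requires Proctor's analysis of the non-reductive group $\Sp_{2k,n-k}$, so deferring to the reference is appropriate. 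One small slip: your check that (ST3) is automatic on $\lambda/\mu$ treats only the first column, but the same inequality $\mu'_j\le\ell(\mu)\le k$ makes it work in every column $j$, so the argument is fine.
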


In the case $k=n$ or $k=0$, this \cref{fct:is:ISpT} recovers the tableau-sum formula \eqref{eq:is:SST} of the Schur polynomial
\[
 s_{\lambda}(x_1,\dotsc,x_n) \ceq \pi_A^{(n)}(s_{\lambda}) 
 \in \bbZ[x_1,\dotsc,x_n]^{\frS_n}
\]
and that \eqref{eq:is:CSpT} of the symplectic Schur polynomial 
\[
 s^C_{\lambda}(x_1,\dotsc,x_n) \ceq \pi_C^{(n)}(s^C_{\lambda}) 
 \in \bbZ[x_1^{\pm1},\dotsc,x_n^{\pm1}]^{W_n}.
\]

Now we can explain the origin of the name of $s^I_{\lambda/\mu}(X \mid Y)$. 

\begin{prp}\label{prp:is:isp}
Let $\lambda$ be a partition. The symmetric function $s^I_\lambda(X \mid Y) \in \Lambda(X \mid Y)$ is uniquely characterized by the following property: For $k,n \in \bbN$ with $k \le n$ and $\ell(\lambda) \le n$, we have 
\[
 \pi^{(k,n-k)}\bigl(s^I_\lambda(X \mid Y)\bigr) = \spf^{(k,n-k)}_\lambda(x_1,\dotsc,x_n),
\]
where $\pi^{(k,n-k)}$ is the ring homomorphism in \cref{dfn:is:piI}.
\end{prp}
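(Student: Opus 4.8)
We must establish two things: that $s^I_\lambda(X \mid Y)$ satisfies the displayed equality, and that the equality pins it down uniquely. The uniqueness is immediate from \cref{lem:is:piI}: if $f \in \Lambda(X\mid Y)$ also satisfies $\pi^{(k,n-k)}(f) = \spf^{(k,n-k)}_\lambda(x_1,\dotsc,x_n)$ for all $k \le n$ with $\ell(\lambda)\le n$, then $\pi^{(k,n-k)}\bigl(f - s^I_\lambda(X\mid Y)\bigr) = 0$ whenever $k, n-k \gg 0$ (which in particular forces $n \ge \ell(\lambda)$), whence $f = s^I_\lambda(X\mid Y)$. So the real work is the equality itself.

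First I would unwind the left-hand side. Comparing \cref{dfn:is:piI} with \eqref{eq:is:piC} and \eqref{eq:is:piA}, the restriction of $\pi^{(k,n-k)}$ to $\Lambda(X)$ is exactly $\pi^{(k)}_C$, and its restriction to $\Lambda(Y)$ is $\pi^{(n-k)}_A$ followed by the relabelling $x_j \mapsto x_{k+j}$. As $\pi^{(k,n-k)}$ is a ring homomorphism, \cref{dfn:is:is} together with \eqref{eq:is:sCp} and \eqref{eq:is:ssp} yields
\[
 \pi^{(k,n-k)}\bigl(s^I_\lambda(X \mid Y)\bigr) = \sum_{\mu \subset \lambda} s^C_\mu(x_1,\dotsc,x_k)\, s_{\lambda/\mu}(x_{k+1},\dotsc,x_n).
\]
Now bring in the tableau-sum formulas: \eqref{eq:is:CSpT} gives $s^C_\mu(x_1,\dotsc,x_k) = \sum_{S \in \SpT^{(k,0)}(\mu)} x^S$, a sum over King's symplectic tableaux in the alphabet $\{1 < \ol{1} < \dotsb < k < \ol{k}\}$, while \eqref{eq:is:SST}, with its alphabet relabelled as $\{k+1 < \dotsb < n\}$, gives $s_{\lambda/\mu}(x_{k+1},\dotsc,x_n) = \sum_U x^U$, a sum over semistandard tableaux $U$ of shape $\lambda/\mu$ in that alphabet. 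Thus it remains to identify the resulting double sum with $\spf^{(k,n-k)}_\lambda(x_1,\dotsc,x_n) = \sum_{T \in \SpT^{(k,n-k)}(\lambda)} x^T$ of \cref{fct:is:ISpT}.

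For this I would exhibit a weight-preserving bijection between $\SpT^{(k,n-k)}(\lambda)$ and the set of triples $(\mu, S, U)$ with $\mu \subset \lambda$, $S \in \SpT^{(k,0)}(\mu)$, and $U$ a semistandard tableau of shape $\lambda/\mu$ in $\{k+1<\dotsb<n\}$, by cutting $T$ along the boundary between the letters $\ol{k}$ and $k+1$. Explicitly, let $\mu$ be the set of cells of $T$ carrying a letter in $\{1 < \ol{1} < \dotsb < k < \ol{k}\}$; rules (ST1) and (ST2) of \cref{dfn:is:SpT} force $\mu$ to be a Young subdiagram of $\lambda$, and rule (ST3) forces $\mu_i = 0$ for $i > k$, hence $\ell(\mu)\le k$. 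Then $S\ceq T|_\mu$ is a King tableau of shape $\mu$ and $U\ceq T|_{\lambda/\mu}$ a semistandard tableau of shape $\lambda/\mu$ in $\{k+1<\dotsb<n\}$, with the weight factoring as $x^T = x^S \cdot x^U$ straight from the definition in \cref{fct:is:ISpT}. The step that needs care is invertibility: gluing an arbitrary King tableau $S$ on $\mu$ (for which $\ell(\mu)\le k$ automatically) to an arbitrary semistandard $U$ on $\lambda/\mu$ must always produce a valid $(k,n-k)$-symplectic tableau. Rules (ST1) and (ST2) across the interface are automatic since every letter of $S$ is $\le\ol{k}$ and every letter of $U$ is $\ge k+1$; and (ST3) on the cells of $\lambda/\mu$ holds because, reading down column $j$, the entry in row $i$ is the $(i-\mu'_j)$-th letter of a strictly increasing sequence drawn from $\{k+1,\dotsc,n\}$, hence is at least $(k+1)+(i-\mu'_j-1) = i+(k-\mu'_j)\ge i$, using $\mu'_j\le\ell(\mu)\le k$. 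Summing $x^T$ over the bijection recovers $\sum_{\mu\subset\lambda} s^C_\mu(x_1,\dotsc,x_k)\, s_{\lambda/\mu}(x_{k+1},\dotsc,x_n)$ (a term with $\ell(\mu)>k$ contributing nothing, consistently with $\SpT^{(k,0)}(\mu)=\emptyset$ and $s^C_\mu(x_1,\dotsc,x_k)=0$), and the proof is complete.

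The main obstacle is the bijection of the last paragraph, i.e.\ checking that rule (ST3) is inherited when $T$ is cut and automatically restored when the two pieces are glued back across the boundary between $\ol{k}$ and $k+1$; the rest is routine bookkeeping.
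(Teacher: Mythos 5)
Your proof is correct and follows essentially the same route as the paper: uniqueness from \cref{lem:is:piI}, unwinding $\pi^{(k,n-k)}$ into $\pi^{(k)}_C \otimes \pi^{(n-k)}_A$, applying the tableau-sum formulas \eqref{eq:is:CSpT} and \eqref{eq:is:SST}, and identifying pairs (King tableau on $\mu$, semistandard tableau on $\lambda/\mu$) with the $(k,n-k)$-symplectic tableaux whose type-$C$ letters occupy $\mu$. The only difference is that you spell out the cut-and-glue bijection and the verification of rule (ST3) via $\mu'_j \le \ell(\mu) \le k$, which the paper asserts without detail; your verification is accurate.
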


\begin{proof}
The uniqueness follows from \cref{lem:is:piI}. Let us show that $s^I_\lambda(X \mid Y)$ satisfies the equality. By \cref{dfn:is:is} of $s^I_\lambda$ and \cref{dfn:is:piI} of $\pi^{(k,n-k)}$, the left hand side is equal to
\[
 \sum_{\mu \subset \lambda} 
 \pi^{(k)}_C\bigl(s^C_\mu(X)\bigr) \pi^{(n-k)}_A\bigl(s_{\lambda/\mu}(Y)\bigr).
\]
By \eqref{eq:is:ssp} and \eqref{eq:is:sCp}, it can be rewritten as 
\[
 \sum_{\mu \subset \lambda} 
 s^C_\mu(x_1,\dotsc,x_k) s_{\lambda/\mu}(x_{k+1},\dotsc,x_n),
\]
which is by the tableau-sum formula \eqref{eq:is:SST} and \eqref{eq:is:CSpT} equal to 
\begin{align}\label{eq:is:TCTA}
 \sum_{\mu \subset \lambda} 
 \sum_{T_C \in \SpT^{k,0}(\mu)} x^{T_C} \sum_{T_A \in \SpT^{0,n-k}([k+1,n];\lambda/\mu)} x^{T_A},
\end{align}
where $\SpT^{k,0}(\mu)$ denotes the set of symplectic tableaux of shape $\mu$ with entries from the totally ordered set $\{1<\ol{1}<2<\ol{2}<\dotsb<k<\ol{k}\}$, and $\SpT^{0,n-k}([k+1,n];\lambda/\mu)$ denotes the set of semi-standard tableaux of shape $\lambda-\mu$ with entries from the totally ordered set $\{k+1<k+2<\dotsb<n\}$. By \cref{dfn:is:SpT}, the product set $\SpT^{k,0}(\mu) \times \SpT^{0,n-k}([k+1,n];\lambda/\mu)$ is equal to the subset of $\SpT^{(k,n-k)}(\lambda)$ consisting of tableaux whose entries $1,\ol{1},\dotsc,k,\ol{k}$ occupy the shape $\mu$. Thus, the summation \eqref{eq:is:TCTA} is equal to
\[
 \sum_{T \in \SpT^{(k,n-k)}(\lambda)} x^T,
\]
which is equal to $\spf^{(k,n-k)}_\lambda(x_1,\dotsc,x_n)$ by \cref{fct:is:ISpT}. 
\end{proof}

Now recall that we have a natural embedding of graded rings 
\begin{align}\label{eq:is:iota}
 \iota_{X,Y}\colon \Lambda(X \cup Y) \linj \Lambda(X \mid Y) = \Lambda(X) \otimes \Lambda(Y), 
\end{align}
where $\Lambda(X \cup Y)$ denotes the ring of symmetric functions with variables $X \cup Y$.
The following  we find that the symmetric function $s^I_\lambda(X \mid Y)$ actually lives in the smaller space $\Lambda(X \cup Y)$.

\begin{prp}\label{prp:is:sI=sC}
For any partition $\lambda$, we have the equality
\[
 s^I_\lambda(X \mid Y) = s^C_\lambda(X \cup Y),
\]
where the right hand side denotes the symplectic Schur function of variables $X \cup Y$,
living in $\Lambda(X \cup Y)$.
\end{prp}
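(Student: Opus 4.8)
The plan is to prove the identity in the ring $\Lambda(X\cup Y)$ by comparing with the classical combinatorial identity for symplectic Schur functions under splitting the variable set, and then to leverage the uniqueness statement of \cref{prp:is:isp} to reduce the claim to a finite-variable tableau computation. Concretely, I would first observe that $s^C_\lambda(X\cup Y)$ is an element of $\Lambda(X)\otimes\Lambda(Y)$ via the embedding $\iota_{X,Y}$ of \eqref{eq:is:iota}, so the asserted equality makes sense as an equality in $\Lambda(X\mid Y)$. Both sides are elements of $\Lambda(X\mid Y)$, so by \cref{lem:is:piI} it suffices to show that $\pi^{(k,n-k)}$ sends both sides to the same Laurent polynomial for all $k,n$ with $k,n-k\gg 0$. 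The left-hand side is handled by \cref{prp:is:isp}: $\pi^{(k,n-k)}(s^I_\lambda(X\mid Y))=\spf^{(k,n-k)}_\lambda(x_1,\dots,x_n)$.

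The remaining task is therefore to identify $\pi^{(k,n-k)}(s^C_\lambda(X\cup Y))$ with $\spf^{(k,n-k)}_\lambda(x_1,\dots,x_n)$. Here I would use that $\pi^{(k,n-k)}\circ\iota_{X,Y}$ is exactly the ring homomorphism $\Lambda(X\cup Y)\to\bbQ[x_1^{\pm1},\dots,x_k^{\pm1},x_{k+1},\dots,x_n]$ sending the combined variable list to $(x_1,\dots,x_k,x_1^{-1},\dots,x_k^{-1},x_{k+1},\dots,x_n)$ and all remaining variables to $0$. Applying this to the tableau-sum formula \eqref{eq:is:CSpT} for $s^C_\lambda$ in $\ell$ variables (take $\ell = 2k+(n-k)$ large enough that $\ell\ge\ell(\lambda)$), the image is $\sum_{T} x^T$ where $T$ ranges over King's symplectic tableaux of shape $\lambda$ in the alphabet $\{1<\ol1<\dots<k<\ol k<k+1<\dots<n\}$ — the point being that King's symplectic-tableau alphabet and semistandard-column/row conditions, restricted so that the "barred/unbarred" $j$ beyond $k$ play the role of plain letters, are precisely King's conditions for this truncated alphabet. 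But a King symplectic tableau in which the letters $>k$ satisfy only the semistandard conditions (with the row-bound (ST3)) is exactly a $(k,n-k)$-symplectic tableau of \cref{dfn:is:SpT}. Hence the image equals $\sum_{T\in\SpT^{(k,n-k)}(\lambda)}x^T=\spf^{(k,n-k)}_\lambda(x_1,\dots,x_n)$ by \cref{fct:is:ISpT}, which is what we wanted.

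The main obstacle I anticipate is the careful verification that the restriction of King's symplectic-tableau conditions to the mixed alphabet $\{1<\ol1<\dots<k<\ol k<k+1<\dots<n\}$ coincides with the $(k,n-k)$-symplectic condition, i.e. that King's extra "column-strictness/flagging" rule for a barred letter $\ol i$ — namely that $\ol i$ may not appear below row $i$ — survives correctly and that the new unbarred letters $k+1,\dots,n$ get exactly condition (ST3) and nothing more. This is precisely the content of the argument in the proof of \cref{prp:is:isp}, where the product decomposition $\SpT^{k,0}(\mu)\times\SpT^{0,n-k}([k+1,n];\lambda/\mu)\cong\SpT^{(k,n-k)}(\lambda)$ is established; I would reuse that bijection here. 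In fact a cleaner route, which I would prefer, is to avoid re-deriving the tableau translation and instead combine \cref{prp:is:isp} with the fact that $\spf^{(k,n-k)}_\lambda(x_1,\dots,x_n)$ already has the tableau-sum formula of \cref{fct:is:ISpT}: comparing that with the King tableau-sum formula for $s^C_\lambda$ specialized via $\pi^{(k,n-k)}\circ\iota_{X,Y}$ gives the match termwise once the two tableau sets are identified, and the identification is exactly \cref{dfn:is:SpT}'s observation that $\SpT^{(k,n-k)}$ interpolates between semistandard and symplectic tableaux.

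Alternatively, and perhaps most economically, one can argue purely on the symmetric-function side: $s^C_\lambda(X\cup Y)=\sum_{\mu\subset\lambda}s^C_\mu(X)\,s_{\lambda/\mu}(Y)$ is a known branching-type identity for symplectic Schur functions (the coproduct-type expansion obtained by splitting the variable set, with the "$A$-part" $Y$ contributing ordinary skew Schur functions), and the right-hand side is by definition $s^I_\lambda(X\mid Y)$. If one is willing to invoke this splitting formula — which follows from the tableau-sum formula \eqref{eq:is:CSpT} by sorting each symplectic tableau of shape $\lambda$ according to the sub-shape $\mu$ occupied by letters in the $X$-alphabet $\{1<\ol1<\dots\}$ versus the $Y$-alphabet — then \cref{prp:is:sI=sC} is immediate from \cref{dfn:is:is}. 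I expect the written proof to take this last route, citing the tableau-sum formula for $s^C$ and performing exactly the same shape-splitting bookkeeping as in the proof of \cref{prp:is:isp}, so that the two propositions share essentially one combinatorial lemma.
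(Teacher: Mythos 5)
Your skeleton (reduce via \cref{lem:is:piI} and \cref{prp:is:isp} to identifying $(\pi^{(k,n-k)}\circ\iota_{X,Y})\bigl(s^C_\lambda(X\cup Y)\bigr)$ with $\spf^{(k,n-k)}_\lambda(x_1,\dotsc,x_n)$) is exactly the paper's, but the paper disposes of that crucial identification by citing \cite[Corollary 2.6]{Oisp}, whereas your substitutes for it do not work as stated. The tableau-sum formula \eqref{eq:is:CSpT} only computes $\pi^{(\ell)}_C(s^C_{\lambda})$, i.e.\ evaluations of the symmetric function $s^C_\lambda$ at fully paired lists $(z_1,z_1^{-1},\dotsc,z_\ell,z_\ell^{-1},0,\dotsc)$; the map $\pi^{(k,n-k)}\circ\iota_{X,Y}$ evaluates instead at the asymmetric list $(x_1,x_1^{-1},\dotsc,x_k,x_k^{-1},x_{k+1},\dotsc,x_n,0,\dotsc)$, and no choice of $\ell$ and no substitution into the $z$'s can decouple $x_j$ from $x_j^{-1}$ for $j>k$ (already for $\lambda=(1)$ the paired evaluations always contain both $x_j$ and $x_j^{-1}$). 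So your claim that ``the image is the sum over King tableaux on the mixed alphabet $\{1<\ol{1}<\dotsb<k<\ol{k}<k+1<\dotsb<n\}$'' is not a consequence of \eqref{eq:is:CSpT}; it is precisely the nontrivial content of Okada's Corollary 2.6 that the paper invokes, and the same objection applies to your second variant (``comparing \cref{fct:is:ISpT} with the King tableau-sum formula specialized via $\pi^{(k,n-k)}\circ\iota_{X,Y}$''), since no such specialized tableau formula is available without that result.

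Your third route is the one that could genuinely differ from the paper: the splitting identity $s^C_\lambda(X\cup Y)=\sum_{\mu\subset\lambda}s^C_\mu(X)\,s_{\lambda/\mu}(Y)$ is true and, together with \cref{dfn:is:is}, gives the proposition immediately. But your proposed justification of it --- sorting King tableaux by the sub-shape occupied by the $X$-letters --- fails: in a King tableau the letters above $\ol{k}$ still carry the symplectic column/flag conditions, so the sorting yields skew \emph{symplectic} polynomials in the remaining variables, i.e.\ an identity of the shape $s^C_\lambda(x_1,\dotsc,x_N)=\sum_\mu s^C_\mu(x_1,\dotsc,x_k)\,s^C_{\lambda/\mu}(x_{k+1},\dotsc,x_N)$, not one with ordinary skew Schur functions in the $Y$-part. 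To obtain the stated splitting one needs an infinite-variable (universal-character) input, e.g.\ the Littlewood-type expansion of $s^C_\nu$ into signed skew Schur functions $s_{\nu/\beta}$ (as in \cite{KT}) combined with the standard splitting $s_{\nu/\beta}(X\cup Y)=\sum_\rho s_{\rho/\beta}(X)s_{\nu/\rho}(Y)$, or else a citation of the same weight as the one the paper uses. As written, the proposal therefore has a genuine gap at the single step that carries the content of the proposition.
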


\begin{proof}
By \cite[Corollary 2.6]{Oisp}, for any partition $\lambda$ satisfying $\ell(\lambda) \le k+1$, 
we have
\[
 \spf^{(k,n-k)}_\lambda(x_1,\dotsc,x_n) = 
 s^C_\lambda(x_1,x_1^{-1},\dotsc,x_k,x_k^{-1},x_{k+1},\dotsc,x_n,0,0,\dotsc).
\]
The left hand side is $\pi^{(k,n-k)}\bigl(s^I_\lambda(X\mid Y)\bigr)$, and 
the right hand side is $(\pi^{(k,n-k)}\circ\iota_{X,Y})\bigl(s^C_\lambda(X \cup Y)\bigr)$.
Then \cref{lem:is:piI} on $\pi^{(k,n-k)}$ yields $s^I_\lambda(X \mid Y) = s^C_\lambda(X \cup Y)$.
\end{proof}

\section{Intermediate symplectic $Q$-functions}\label{s:iQ}

\subsection{Schur's $Q$-functions and symplectic $Q$-functions}

Here we give a summary on Schur's $Q$-function \cite[III.8]{M95} and its symplectic analogue, the symplectic $Q$-function \cite[\S3]{OsQ}.

In the ring $\Lambda=\Lambda(X)$ of symmetric functions with variables $X=(x_1,x_2,\dotsc)$, we define $q^A_r \in \Lambda$ ($r \in \bbN$) by the generating series as
\[
 \sum_{r \ge 0} q^A_r z^r = \prod_{i \ge 1} \frac{1+x_i z}{1-x_i z}.
\]
We denote by $\Gamma \subset \Lambda$ the graded subalgebra generated by $q_r$'s, and set $\Gamma_d \ceq \Lambda_d \cap \Gamma$. 

Now, we define $Q^A_\lambda \in \Lambda$ for each \emph{strict} partition $\lambda$ (see \cref{ss:0:ntn}, \ref{i:ntn:strpar}) inductively on the length $\ell(\lambda)$. In the case of length $0,1,2$, we set
\begin{align}\label{eq:iQ:QArec}
 Q^A_{\emptyset} \ceq 1, \quad Q^A_{(r)} \ceq q^A_r \quad (r>0), \quad 
 Q^A_{(r,s)} \ceq q^A_r q^A_s+2\sum_{k=1}^s (-1)^k q_{r+k} q_{r-k} \quad (r>s>0).
\end{align}
Then, in the case of length $\ge 3$, we define
\begin{align}\label{eq:iQ:QA-M}
 Q^A_\lambda \ceq \Pf\bigl[Q^A_{(\lambda_i,\lambda_j)}\bigr]_{1 \le i,j \le m}.
\end{align}
Here $\Pf$ denotes the Pfaffian of an even-size skew-symmetric matrix, and $m \ceq \ell(\lambda)$ or $\ell(\lambda)+1$ according whether $\ell(\lambda)$ is even or odd. In the case $\ell(\lambda)$ is odd, we put $\lambda_m \ceq 0$. Finally, for the entries of the matrix, we used the convention $Q^A_{(r,0)} \ceq Q^A_{(r)}$, $Q^A_{(s,r)} \ceq Q^A_{(r,s)}$ for $s>r$, and $Q^A_{(r,r)} \ceq 0$ for $r \in \bbN$. Then the family $\{Q^A_\lambda \mid \lambda\colon \text{partitions}\}$ is a basis of $\Gamma$. We call $Q^A_\lambda(X)$ \emph{Schur's $Q$-function}.

Let us also recall that Schur's $Q$-function can be extended to the skew diagrams. Let $\lambda=(\lambda_1,\dotsc,\lambda_l)$ and $\mu=(\mu_1,\dotsc,\mu_m)$ be strict partitions such that $\lambda_l>0$ and $\mu_m \ge 0$. We may assume that $l+m$ is even since we can replace $m$ by $m+1$ if $l+m$ is odd. Then we define
\begin{align}\label{eq:iQ:QAlm}
 Q^A_{\lambda/\mu} \ceq 
 \Pf\begin{bmatrix} M^A_\lambda & N^A_{\lambda,\mu} \\ -\trs{N^A_{\lambda,\mu}} & O \end{bmatrix}, 
\end{align}
where $M^A_\lambda \ceq \bigl[Q^A_{(\lambda_i,\lambda_j)}\bigr]_{1 \le i,j \le l}$ as in \eqref{eq:iQ:QA-M}, and $N^A_{\lambda,\mu} \ceq \bigl[q^A_{\lambda_i-\mu_{m+j-1}}\bigr]_{1 \le i \le l, 1 \le j \le m}$. We call $Q^A_{\lambda/\mu}$ \emph{skew Schur's $Q$-function}. We can recover \eqref{eq:iQ:QA-M} by $Q^A_{\lambda/\emptyset}=Q^A_\lambda$. The Pfaffian formula \eqref{eq:iQ:QAlm} is originally due to J\'ozefiak and Pragacz \cite{JP}. See \cite[III.8, Exercise 9]{M95} for a brief account, and \cite[\S 6, Theorem 6.2]{S} for a combinatorial proof based on the Lindstr\"{o}m-Gessel-Viennot theorem \cite{GV,L}.

For strict partitions $\lambda,\mu$ with $\ell(\lambda) \le n$, the truncation $\pi^{(n)}_A\colon \Lambda \to \Lambda^{(n)}$ in \eqref{eq:is:piA} yields \emph{skew Schur's $Q$-polynomial} 
\begin{align}\label{eq:iQ:QAp}
 Q^A_{\lambda/\mu}(x_1,\dotsc,x_n) \ceq \pi^{(n)}_A(Q^A_{\lambda/\mu}).
\end{align}
We denote $Q^A_{\lambda}(x_1,\dotsc,x_n) \ceq Q^A_{\lambda/\emptyset}(x_1,\dotsc,x_n)$ and call it \emph{Schur's $Q$-polynomial}. The polynomial $Q^A_{\lambda/\mu}$ vanishes unless $\lambda \supset \mu$.

For strict partitions $\lambda$ and $\mu$ satisfying $\ell(\lambda) \le n$ and $\lambda \supset \mu$, Schur's $Q$-polynomial has the tableau-sum formula \cite[III.8, (8.16')]{M95}:
\begin{align}\label{eq:iQ:QA-T}
 Q^A_{\lambda/\mu}(x_1,\dotsc,x_n) = \sum_{T \in \QT^{(0,n)}(\lambda/\mu)} x^T,
\end{align}
where $\QT^{(0,n)}(\lambda/\mu)$ denotes the set of marked shifted tableaux of shape $S(\lambda/\mu)$  in the sense of \cite[III.8]{M95}. We refer to \cref{rmk:iQ:QT} for the definition of a marked shifted tableau. As for the symbol $S(\lambda/\mu)$, we have:

\begin{dfn}
Let $\lambda$ and $\mu$ be partitions.
\begin{enumerate}
\item 
The \emph{shifted diagram} $S(\lambda)$ of $\lambda$ is defined to be
\[
 S(\lambda) \ceq
 \{(i,j) \in \bbZ^2 \mid 1 \le i \le \ell(\lambda), \ i \le j \le \lambda_i+i-1\},
\]
which will be depicted in the same way as the ordinary Young diagrams 
(see \cref{ss:0:ntn}, \ref{i:ntn:Y}).
Note that $S(\lambda)$ is a partition if and only if $\lambda$ is strict (\cref{ss:0:ntn}, \ref{i:ntn:strpar}).

\item
If $\lambda \supset \mu$, then we define the shifted skew diagram $S(\lambda/\mu)$ as the set-theoretical difference $S(\lambda/\mu) \ceq S(\lambda)-S(\mu)$. 
\end{enumerate}
\end{dfn}

Next, we recall the symplectic $Q$-functions \cite[\S3]{OsQ}. 
First we define $q^C_r \in \Lambda$ ($r \in \bbN$) by 
\[
 \sum_{r \ge 0} q^C_r z^r = 
 \prod_{i \ge 1} \frac{1+x_i z}{1-x_i z}\frac{1+x_i^{-1} z}{1-x_i^{-1} z}.
\]
Then we define $Q^A_\lambda \in \Lambda$ for each strict partition $\lambda$ inductively on the length $\ell(\lambda)$ as
\begin{gather}
\nonumber
 Q^C_{\emptyset} \ceq 1, \quad Q^C_{(r)} \ceq q^C_r \quad (r>0), \\
\nonumber
 Q^C_{(r,s)} \ceq q^C_r q^C_s+2\sum_{k=1}^s (-1)^k \bigl(q^C_{r+k}
 +2\sum_{i=1}^{k-1}q^C_{r+k-2i}+q^C_{r-k}\bigr) q^C_{s-k} \quad (r>s>0), \\ 
\label{eq:iQ:QC-M}
 Q^C_\lambda \ceq \Pf\bigl(Q_{(\lambda_i,\lambda_j)}\bigr)_{1 \le i,j \le m}.
\end{gather}
The size $m$ of the matrix in \eqref{eq:iQ:QC-M} is given by $\ell(\lambda)$ if it is even, and by $\ell(\lambda)+1$ otherwise. Except the summation term for $Q^C_{(r,s)}$, the recursion is almost the same as \eqref{eq:iQ:QArec}. We call the obtained family $\{Q^C_\lambda \in \Lambda \mid \lambda \in \SPar \}$ \emph{the symplectic $Q$-functions}.

We also have \emph{the skew symplectic $Q$-function} \cite[(3.14)]{OsQ}. For strict partitions $\lambda=(\lambda_1,\dotsc,\lambda_l)$ and $\mu=(\mu_1,\dotsc,\mu_m)$ such that $\lambda_l>0$, $\mu_m \ge 0$ and $l+m$ is even, we define
\begin{align}\label{eq:iQ:QClm}
 Q^C_{\lambda/\mu} \ceq 
 \Pf\begin{bmatrix} M^C_\lambda & N^C_{\lambda,\mu} \\ -\trs{N^C_{\lambda,\mu}} & O \end{bmatrix}, 
\end{align}
where $M^C_\lambda \ceq \bigl[Q^C_{(\lambda_i,\lambda_j)}\bigr]_{1 \le i,j \le l}$ and $N^C_{\lambda,\mu} \ceq \bigl[q^C_{\lambda_i-\mu_{m+1-j}}\bigr]_{1 \le i \le l, 1 \le j \le m}$.
We can recover \eqref{eq:iQ:QC-M} by $Q^C_{\lambda/\emptyset}=Q^C_\lambda$.

Let $\pi^{(n)}_C\colon \Lambda \to \bbQ[x_1^{\pm1},\dotsc,x_n^{\pm 1}]^{W_n}$ be the ring homomorphism in \eqref{eq:is:piC}. For strict partitions $\lambda$ and $\mu$ with $\ell(\lambda) \le n$, we define
\[
 Q^C_{\lambda/\mu}(x_1,\dotsc,x_n) \ceq \pi^{(n)}_C(Q^C_{\lambda/\mu})
\]
and call it \emph{the skew symplectic $Q$-polynomial}. 
The case $Q^C_\lambda(x_1,\dotsc,x_n) \ceq Q^C_{\lambda/\emptyset}(x_1,\dotsc,x_n)$ is called \emph{the symplectic $Q$-polynomial}. The polynomial $Q^C_{\lambda/\mu}(x_1,\dotsc,x_n)$ vanishes unless $\lambda \supset \mu$, and in the case $\lambda \supset \mu$ it has the tableau-sum formula \cite[Theorem 4.2]{OsQ}: 
\begin{align}\label{eq:iQ:QC-T}
 Q^C_{\lambda/\mu}(x_1,\dotsc,x_n) = \sum_{T \in \QT^{(k,0)}(\lambda/\mu)} x^T,
\end{align}
where $\QT^{(n,0)}(\lambda/\mu)$ denotes the set of symplectic marked shifted tableaux of shape $S(\lambda/\mu)$. See \cref{rmk:iQ:QT} for the detail.

\begin{rmk}
In \cite{OsQ}, the symmetric function $Q^C_{\lambda/\mu}(X)$ is called the universal symplectic $Q$-function, and the Laurent polynomial $Q^C_{\lambda/\mu}(x_1,\dotsc,x_n)$ is called the symplectic $Q$-function. Our terminology follows the principle \ref{i:ntn:sf} in \cref{ss:0:ntn}.
\end{rmk}

\subsection{Intermediate symplectic $Q$-function}

Now we introduce the intermediate analogue of Schur's $Q$- and symplectic $Q$-functions. 

\begin{dfn}\label{dfn:iQ:iQf}
Let $X$ and $Y$ be two infinite sequences of variables.
For strict partitions $\lambda$ and $\mu$ such that $\lambda \supset \mu$, we set
\[
 Q^{I}_{\lambda/\mu}(X \mid Y) \ceq \sum_{\nu} Q^C_{\lambda/\nu}(X) Q^A_{\nu/\mu}(Y),
\]
where the sum is taken over the strict partitions $\nu$ 
such that $\lambda \supset \nu$ and $\nu \supset \mu$.
We call it \emph{the intermediate symplectic $Q$-function}.
\end{dfn}

Let us also introduce the polynomial version.
Let $k,n \in \bbN$ satisfy $k \le n$. Recall the ring homomorphism in \cref{dfn:is:piI}:
\begin{align*}
&\pi^{(k,n-k)}\colon \Lambda(X \mid Y) \lto \Lambda^{(k,n-k)}(X) \ceq 
 \bbQ[x_1^{\pm 1},\dotsc,x_k^{\pm 1},x_{k+1},\dotsc,x_n]^{W_k \times \frS_{n-k}}, \\
&\pi^{(k,n-k)}(x_i) \ceq \begin{cases} 
 x_i & (i \le k) \\ x_{i-k}^{-1} & (k < i \le 2k) \\ 0 & (2k < i) \end{cases}, \quad 
 \pi^{(k,n-k)}(y_j) \ceq \begin{cases} 
 x_{k+j} & (j \le n-k) \\ 0 & (n-k < j) \end{cases}.
\end{align*}
We sometimes denote $x = (x_1,\dotsc,x_k \mid x_{k+1},\dotsc,x_n)$ to distinguish 
the former and latter parts. 

\begin{dfn}\label{dfn:iQ:iQp}
Let $k$ and $n$ be as above.
For strict partitions $\lambda$ and $\mu$ with $\ell(\lambda) \le n$, we define
\[
 Q^{(k,n-k)}_{\lambda/\mu}(x_1,\dotsc,x_n) \ceq \pi^{(k,n-k)}(Q^I_{\lambda/\mu})
\]
and call it \emph{the intermediate symplectic $Q$-polynomial}.
\end{dfn}

\begin{rmk}\label{rmk:iQ:QAC}
The definition immediately gives
\begin{align*}
 Q^{n,0}_{\lambda/\mu}(x_1,\dotsc,x_n) = \pi^{(n)}_C(Q^C_{\lambda/\mu}) = 
 Q^C_{\lambda/\mu}(x_1,\dotsc,x_n), \\
 Q^{0,n}_{\lambda/\mu}(x_1,\dotsc,x_n) = \pi^{(n)}_A(Q^A_{\lambda/\mu}) = 
 Q^A_{\lambda/\mu}(x_1,\dotsc,x_n),
\end{align*}
i.e., the specialization recovers Schur's $Q$-polynomial and the symplectic $Q$-polynomial, respectively. This is the origin of the name ``intermediate'' symplectic $Q$-polynomial.
\end{rmk}

As the $Q$-polynomials $Q^A_{\lambda/\mu}(x)$ and $Q^C_{\lambda/\mu}(x)$ enjoy the tableau-sum formula \eqref{eq:iQ:QA-T} and \eqref{eq:iQ:QC-T}, the intermediate polynomials $Q^{k.n-k}_{\lambda/\mu}(x)$ also has a tableau-sum formula, stated in \cref{prp:iQ:iQ-T} below. 
As a preparation, let us introduce:

\begin{dfn}\label{dfn:iQ:QT}
Let $k,n \in \bbN$ satisfy $k \le n$, 
and $\lambda,\mu$ be strict partitions such that $\lambda \supset \mu$. 
An \emph{intermediate symplectic primed shifted tableau} of shape $S(\lambda/\mu)$ is a filling of the cells of $S(\lambda/\mu)$ with entries from the totally ordered set
\begin{align}\label{eq:iQ:k-n}
\begin{split}
 \{1'&<1<\ol{1}'<\ol{1}<2'<2<\ol{2}'<\ol{2}<\dotsb<k'<k<\ol{k}'<\ol{k} \\
     &<(k+1)'<k+1<(k+2)'<k+2<\dotsb<n'<n\}
\end{split}
\end{align}
satisfying the following conditions.
\begin{enumerate}[label=(QT\arabic*)]
\item the entries in each row weakly increasing from left to right;
\item the entries in each column weakly increasing from top to bottom;
\item each row contains at most one $i'$ and at most one $\ol{j'}$ for 
      each $i \in \{1,\dotsc,n\}$ and $j \in \{1,\dotsc,k\}$;
\item each column contains at most one $i$ and at most one $\ol{j}$ for 
      each $i \in \{1,\dotsc,n\}$ and $j \in \{1,\dotsc,k\}$;
\item the entry of the $i$-th entry on the main diagonal is one of $\{i',i,\ol{i}',\ol{i}\}$
      for each $i \in \{1,\dotsc,k\}$.
\end{enumerate}
We denote by $\QT^{(k,n-k)}(\lambda/\mu)$ the set of all the intermediate symplectic primed shifted tableaux of shape $S(\lambda/\mu)$. We also denote $\QT^{(k,n-k)}(\lambda) \ceq \QT^{(k,n-k)}(\lambda/\emptyset)$.
\end{dfn}

\begin{rmk}\label{rmk:iQ:QT}
Our definition is an intermediate version of the following 
``type $A$'' and ``type $C$'' notions of primed shifted tableaux.
\begin{itemize}
\item
In the case $k=0$, the set $\QT^{(0,n)}(\theta)$ coincides with the set $\QT^A(\theta)$ of marked shifted tableaux of shape $S(\theta)$ in the sense of \cite[III.8]{M95}. 
The corresponding conditions are given in \cite[p.256, (M1)--(M3)]{M95}.
\item
In the case $n-k=0$, the set $\QT^{(n,0)}(\theta)$ coincides with the set $\QT^C(\theta)$ of symplectic primed shifted tableaux of shape $S(\theta)$ in the sense of \cite[\S2.2]{HK}. 
The corresponding conditions are given in \cite[{PST1--PST4, QST$\ol{5}$}]{HK}.
\end{itemize}
\end{rmk}

\begin{prp}\label{prp:iQ:iQ-T}
Let $k,n \in \bbN$ satisfy $k \le n$, and $x=(x_1,\dotsc,x_n)$ be a sequence of variables.
Also let $\lambda$ and $\mu$ be strict partitions such that $\lambda \supset \mu$.
For $T \in \QT^{(k,n-k)}(\lambda/\mu)$, we set 
\[
 x^T \ceq \prod_{i=1}^k x_i^{m(i')+m(i)-m(\ol{i}')-m(\ol{i})} 
          \prod_{j=k+1}^n  x_j^{m(j')+m(j)},
\]
where $m(\gamma)$ denotes the multiplicity of the entry $\gamma$ in $T$. 
If $\ell(\lambda) \le n$, then we have 
\begin{align}\label{eq:iQ:Qp}
 Q^{(k,n-k)}_{\lambda/\mu}(x_1,\dotsc,x_n) = \sum_{T \in \QT^{(k,n-k)}(\lambda/\mu)}x^T.
\end{align}
\end{prp}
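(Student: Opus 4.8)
The plan is to deduce the tableau-sum formula for $Q^{(k,n-k)}_{\lambda/\mu}$ directly from its definition as the specialization $\pi^{(k,n-k)}(Q^I_{\lambda/\mu})$, by combining the already-known tableau-sum formulas \eqref{eq:iQ:QA-T} and \eqref{eq:iQ:QC-T} for $Q^A$ and $Q^C$ under the factorization $Q^I_{\lambda/\mu}(X\mid Y)=\sum_\nu Q^C_{\lambda/\nu}(X)Q^A_{\nu/\mu}(Y)$ from \cref{dfn:iQ:iQf}. This mirrors exactly the argument used for \cref{prp:is:isp}: the intermediate symplectic Schur function case. First I would apply $\pi^{(k,n-k)}$ term by term; since $\pi^{(k,n-k)}$ restricted to $\Lambda(X)$ is $\pi^{(k)}_C$ and restricted to $\Lambda(Y)$ is $\pi^{(n-k)}_A$ (followed by relabelling $y_j\mapsto x_{k+j}$), we get
\[
 Q^{(k,n-k)}_{\lambda/\mu}(x_1,\dotsc,x_n)
 = \sum_{\nu} Q^C_{\lambda/\nu}(x_1,\dotsc,x_k)\,Q^A_{\nu/\mu}(x_{k+1},\dotsc,x_n),
\]
the sum over strict $\nu$ with $\lambda\supset\nu\supset\mu$. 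Note that each summand vanishes unless $\lambda\supset\nu\supset\mu$, so the constraint is automatic; and $\ell(\nu)\le\ell(\lambda)\le n$ ensures the $Q^C$-polynomial is nonzero only when $\ell(\nu)\le k$, which is fine because those with $\ell(\nu)>k$ contribute zero.

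Next I would insert the tableau-sum formulas: $Q^C_{\lambda/\nu}(x_1,\dotsc,x_k)=\sum_{T_C\in\QT^{(k,0)}(\lambda/\nu)}x^{T_C}$ with entries from $\{1'<1<\ol1'<\ol1<\dotsb<k'<k<\ol k'<\ol k\}$, and $Q^A_{\nu/\mu}(x_{k+1},\dotsc,x_n)=\sum_{T_A\in\QT^{(0,n-k)}([k+1,n];\nu/\mu)}x^{T_A}$ with entries from $\{(k+1)'<k+1<\dotsb<n'<n\}$. This rewrites $Q^{(k,n-k)}_{\lambda/\mu}$ as a double sum over pairs $(T_C,T_A)$ ranging over $\bigsqcup_\nu \QT^{(k,0)}(\lambda/\nu)\times\QT^{(0,n-k)}([k+1,n];\nu/\mu)$. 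The heart of the proof is then a bijection: such a pair, glued along the shape $S(\nu)$, should be identified with an element of $\QT^{(k,n-k)}(\lambda/\mu)$ whose entries from $\{1',1,\dotsc,\ol k',\ol k\}$ occupy exactly $S(\nu/\mu)$ and whose entries from $\{(k+1)',\dotsc,n\}$ occupy $S(\lambda/\nu)$ — and under this identification $x^{T_C}x^{T_A}=x^T$ by inspection of the weight definitions. The disjoint union over all strict $\nu$ with $\lambda\supset\nu\supset\mu$ exactly recovers all ways of drawing the boundary between the "small-alphabet" region and the "large-alphabet" region inside $S(\lambda/\mu)$, giving $\QT^{(k,n-k)}(\lambda/\mu)$.

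The main obstacle is verifying this gluing bijection at the seam — i.e., that conditions (QT1)--(QT5) of \cref{dfn:iQ:QT} for the glued tableau $T$ are equivalent to the conjunction of: (a) $T_C$ being a symplectic primed shifted tableau of shape $S(\lambda/\nu)$, (b) $T_A$ being a marked shifted tableau of shape $S(\nu/\mu)$ with entries in $[k+1,n]$, and (c) the row/column weak-increase compatibility across the border between $S(\nu)$ and $S(\lambda/\nu)$. Most of this is local and routine: (QT1)--(QT2) split cleanly, and (QT3)--(QT4) hold because an $i'$, $i$ with $i\le k$ or $\ol j'$, $\ol j$ entry can only occur in the $T_C$-region while $i\le n$, $i>k$ entries lie in the $T_A$-region; the diagonal condition (QT5) only constrains cells on the main diagonal of columns $1,\dotsc,k$, which lie in the $T_C$-region and is exactly the symplectic diagonal condition QST$\ol 5$ of \cite{HK}. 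The one genuinely interactive point is that the border itself is "legal": since all entries $\le\ol k$ are strictly smaller than all entries $\ge(k+1)'$ in the order \eqref{eq:iQ:k-n}, the weak-increase conditions across the seam are automatically satisfied, so no further constraint is imposed — and this is precisely why the disjoint union over $\nu$ matches the single set $\QT^{(k,n-k)}(\lambda/\mu)$. I would also record, as in \cref{prp:is:isp}, that $\ell(\lambda)\le n$ is exactly what makes the shape fit and the formula literal; and note $\pi^{(k,n-k)}$ is well-defined on $Q^I_{\lambda/\mu}\in\Lambda(X\mid Y)$ so the left side makes sense. Finally, the cases $k=0$ and $k=n$ recover \eqref{eq:iQ:QA-T} and \eqref{eq:iQ:QC-T} respectively, as already flagged in \cref{rmk:iQ:QT} and \cref{rmk:iQ:QAC}, providing a consistency check.
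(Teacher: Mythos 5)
Your overall strategy --- apply $\pi^{(k,n-k)}$ to the factorization of $Q^I_{\lambda/\mu}$, insert the tableau-sum formulas \eqref{eq:iQ:QA-T} and \eqref{eq:iQ:QC-T}, and glue the two tableaux along an intermediate strict partition $\nu$ --- is the same as the paper's. But there is a genuine gap at the gluing step, caused by which factor sits on the \emph{inner} part of the shape. Starting from \cref{dfn:iQ:iQf} as printed you correctly obtain $\sum_\nu Q^C_{\lambda/\nu}(x_1,\dotsc,x_k)\,Q^A_{\nu/\mu}(x_{k+1},\dotsc,x_n)$, i.e.\ the type $C$ factor on the \emph{outer} shape $S(\lambda/\nu)$ and the type $A$ factor on the \emph{inner} shape $S(\nu/\mu)$. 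Yet when you glue you assert that the resulting filling has the letters $1',1,\dotsc,\ol{k}',\ol{k}$ on $S(\nu/\mu)$ and the letters $(k+1)',\dotsc,n$ on $S(\lambda/\nu)$ --- the opposite of what your pair $(T_C,T_A)$ actually provides. With your orientation the seam argument fails rather than being ``automatic'': in the order \eqref{eq:iQ:k-n} every symplectic letter is smaller than every type $A$ letter, and in each row and each column the cells of $S(\nu/\mu)$ precede those of $S(\lambda/\nu)$, so placing the large alphabet inside and the small alphabet outside violates (QT1)--(QT2) whenever the two regions meet a common row or column; the glued filling is then not an element of $\QT^{(k,n-k)}(\lambda/\mu)$ at all, and the weights do not reorganize into the claimed sum.

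The decomposition that does glue correctly is $\sum_\nu Q^C_{\nu/\mu}(x_1,\dotsc,x_k)\,Q^A_{\lambda/\nu}(x_{k+1},\dotsc,x_n)$, with the symplectic letters occupying the inner shifted skew diagram $S(\nu/\mu)$; this is what the paper's proof actually uses (its displayed sum is to be read with $Q^A_{\lambda/\nu}$, and note there is a discrepancy between \cref{dfn:iQ:iQf} as printed and the proof --- exactly the swap you ran into). To complete your argument you must therefore either work with the $C$-inner form of the definition, or prove the additional, nontrivial identity that $\sum_\nu Q^C_{\lambda/\nu}(x_1,\dotsc,x_k)\,Q^A_{\nu/\mu}(x_{k+1},\dotsc,x_n)$ and $\sum_\nu Q^C_{\nu/\mu}(x_1,\dotsc,x_k)\,Q^A_{\lambda/\nu}(x_{k+1},\dotsc,x_n)$ coincide; nothing you cite yields this swap, and your writeup silently conflates the two. (A smaller point: your remark that the $Q^C$-factor ``is nonzero only when $\ell(\nu)\le k$'' is neither justified nor needed; what one should check instead is that the tableau-sum formulas being inserted are applicable to the skew shapes that occur, which is where the hypothesis $\ell(\lambda)\le n$ and the diagonal condition (QT5) enter.)
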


\begin{proof}
As in the proof of \cref{prp:is:isp}, 
the left hand side of \eqref{eq:iQ:Qp} is equal to
\[
 \sum_{\nu} Q^C_{\nu/\mu}(x_1,\dotsc,x_k) Q^A_{\lambda/\mu}(x_{k+1},\dotsc,x_n),
\]
where the sum is taken over the strict partitions $\nu$ such that $\mu \subset \nu \subset \lambda$. By the tableau-sum formulas \eqref{eq:iQ:QA-T} and \eqref{eq:iQ:QC-T}, it is equal to 
\begin{align}\label{eq:iQ:TCTA}
 \sum_{\nu} 
 \sum_{T_C \in \QT^{(k,0)}(\nu/\mu)} x^{T_C} \sum_{T_A \in \QT^{(0,n-k)}([k+1,n];\lambda/\nu)} x^{T_A},
\end{align}
where 
$\QT^{(0,n-k)}([k+1,n];\lambda/\nu)$ denotes the set of marked shifted tableaux of shape $S(\lambda/\nu)$ with entries from the totally ordered set $\{k+1<(k+1)'<\dotsb<n'<n\}$. By \cref{dfn:iQ:QT} and \cref{rmk:iQ:QT}, the product set $\QT^{(k,0)}(\nu/\mu) \times \QT^{(0,n-k)}([k+1,n];\lambda/\nu)$ is equal to the subset of $\QT^{(k,n-k)}(\lambda/\mu)$ consisting of tableaux whose entries $1',1,\ol{1}',\ol{1},\dotsc,k',k,\ol{k}',\ol{k}$ occupy the shifted skew diagram $S(\nu/\mu)$. Thus, the summation \eqref{eq:iQ:TCTA} is equal to the right hand side of \eqref{eq:iQ:Qp}.
\end{proof}

%
%

\subsection{Formulas of intermediate symplectic $Q$-polynomials}

Here we study basic properties of the (Laurent) polynomial $Q^{k:n-k}_{\lambda/\mu}(x_1,\dotsc,x_n)$. We may take \eqref{eq:iQ:Qp} for its definition: 
\[
 Q^{(k,n-k)}_{\lambda/\mu}(x_1,\dotsc,x_n) = \sum_{T \in \QT^{(k,n-k)}(\lambda/\mu)}x^T.
\]

\begin{lem}\label{lem:iQ:tab}
Let $k,n \in \bbN$ and $x=(x_1,\dotsc,x_n)$ be as in \cref{prp:iQ:iQ-T}.
Also, let $\lambda$ and $\mu$ be strict partitions with $\lambda \supset \mu$.
\begin{enumerate}
\item 
For an additional indeterminate $z$,
we have the following equality of formal series.
\begin{align*}
 \sum_{l \in \bbN} Q^{(k,n-k)}_{(l)}(x_1,\dotsc,x_n) z^l = 
 \prod_{i=  1}^k \frac{1+x_i z}{1-x_i z} \frac{1+x_i^{-1}z}{1-x_i^{-1}z}
 \prod_{j=k+1}^n \frac{1+x_j z}{1-x_j z}.
\end{align*}
\item
We have
\begin{align}\label{eq:iQ:l=1}
 Q^{(k,n-k)}_{\lambda/\mu}(x_1,\dotsc,x_n) = 
 \sum \prod_{i=1}^k   Q^C_{\mu^{(i)}/\mu^{(i-1)}}(x_i) 
      \prod_{j=k+1}^n Q^A_{\mu^{(j)}/\mu^{(j-1)}}(x_j),
\end{align}
where the sum is taken over all the sequences 
$\mu=\mu^{(0)} \subset \mu^{(1)} \subset \dotsb \subset \mu^{(n-1)} \subset \mu^{(n)}=\lambda$.
\item \label{i:lem:iQ:tab:3}
For each single variable $x_i$ with $i=1,\dotsc,k$, we have 
\[
 Q^C_{\lambda/\mu}(x_i) = \begin{cases} 0 & (\ell(\lambda)-\ell(\mu) > 1) \\ 
 \det\bigl[Q^C_{(\lambda_l-\mu_m)}(x_i)\bigr]_{l,m=1}^{\ell(\lambda)} & 
 (\ell(\lambda)-\ell(\mu) \le 1) \end{cases}
\]
with the convention $Q^C_{(r)}(x)=0$ for $r<0$. Similarly, for $j=k+1,\dotsc,n$, we have
\[
 Q^A_{\lambda/\mu}(x_j) = \begin{cases} 0 & (\ell(\lambda)-\ell(\mu) > 1) \\ 
 \det\bigl[Q^A_{(\lambda_l-\mu_m)}(x_j)\bigr]_{l,m=1}^{\ell(\lambda)} & 
 (\ell(\lambda)-\ell(\mu) \le 1) \end{cases}.
\]
\end{enumerate}
\end{lem}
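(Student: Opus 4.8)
\emph{Overview.} The three statements are essentially independent; I would prove (2) first, deduce (1) from it, and treat (3) separately via the Pfaffian definition. Throughout I keep the hypotheses of \cref{prp:iQ:iQ-T}, in particular $\ell(\lambda)\le n$.

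\smallskip
\noindent\emph{Part (2).} The plan is to iterate the two-block variable-splitting (coproduct) formula. As in the proof of \cref{prp:iQ:iQ-T}, applying $\pi^{(k,n-k)}$ to the definition of $Q^I_{\lambda/\mu}$ yields
\[
 Q^{(k,n-k)}_{\lambda/\mu}(x_1,\dotsc,x_n) = \sum_{\mu \subset \nu \subset \lambda} Q^C_{\nu/\mu}(x_1,\dotsc,x_k)\, Q^A_{\lambda/\nu}(x_{k+1},\dotsc,x_n),
\]
the sum over strict partitions $\nu$. Next I would invoke the standard fact that skew symplectic $Q$-functions and skew Schur's $Q$-functions are multiplicative under splitting the variable set, with the smaller letters occupying the cells nearer to the inner shape (\cite[\S3]{OsQ} and \cite[III.8]{M95}); iterating this down to single variables gives
\[
 Q^C_{\nu/\mu}(x_1,\dotsc,x_k) = \sum_{\mu = \sigma^{(0)} \subset \dotsb \subset \sigma^{(k)} = \nu} \prod_{i=1}^k Q^C_{\sigma^{(i)}/\sigma^{(i-1)}}(x_i)
\]
and likewise $Q^A_{\lambda/\nu}(x_{k+1},\dotsc,x_n) = \sum \prod_{j=1}^{n-k} Q^A_{\tau^{(j)}/\tau^{(j-1)}}(x_{k+j})$ over chains $\nu = \tau^{(0)} \subset \dotsb \subset \tau^{(n-k)} = \lambda$. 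Concatenating the two chains at $\nu$ — i.e.\ setting $\mu^{(i)} = \sigma^{(i)}$ for $i \le k$ and $\mu^{(k+j)} = \tau^{(j)}$ — turns the double sum into a sum over all chains $\mu = \mu^{(0)} \subset \dotsb \subset \mu^{(n)} = \lambda$ with the product of single-variable factors asserted in \eqref{eq:iQ:l=1}.

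\smallskip
\noindent\emph{Part (1).} This is the special case $\lambda = (l)$, $\mu = \emptyset$ of part (2): the only strict partitions $\theta$ with $\emptyset \subset \theta \subset (l)$ are $\emptyset$ and the one-row partitions $(m)$, $0 \le m \le l$, so a chain $\emptyset = \mu^{(0)} \subset \dotsb \subset \mu^{(n)} = (l)$ is nothing but a weak composition $(a_1,\dotsc,a_n)$ of $l$ (with $\mu^{(t)}=(a_1+\dotsb+a_t)$), and the single-variable factors become $q^C_{a_i}(x_i)$ for $i \le k$ and $q^A_{a_j}(x_j)$ for $j > k$. Summing over all compositions and all $l$ gives the coefficient of $z^l$ in $\prod_{i=1}^k\bigl(\sum_{r\ge0}q^C_r(x_i)z^r\bigr)\prod_{j=k+1}^n\bigl(\sum_{r\ge0}q^A_r(x_j)z^r\bigr)$, which by the generating series defining $q^C$ and $q^A$ (i.e.\ by applying $\pi^{(1)}_C$, resp.\ $\pi^{(1)}_A$, to them) is exactly the right-hand side of the asserted identity.

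\smallskip
\noindent\emph{Part (3).} The tableau-sum formula is not available here (the number of variables is $1$, in general smaller than $\ell(\lambda)$), so I would argue algebraically from the J\'ozefiak--Pragacz-type formula \eqref{eq:iQ:QClm}, resp.\ \eqref{eq:iQ:QAlm}. Write $l = \ell(\lambda)$ and, as in \eqref{eq:iQ:QClm}, pad $\mu$ by a trailing zero if needed so that the matrix has even size $l+m$ with $m\in\{\ell(\mu),\ell(\mu)+1\}$. The crucial input is the single-variable vanishing $\pi^{(1)}_C\bigl(Q^C_{(r,s)}\bigr)=0$ for $r>s>0$ (and the convention $Q^C_{(r,r)}=0$); since all parts of $\lambda$ are positive, this forces $\pi^{(1)}_C(M^C_\lambda)=O$, so after applying $\pi^{(1)}_C$ the matrix in \eqref{eq:iQ:QClm} becomes a skew-symmetric matrix whose $l\times l$ and $m\times m$ diagonal blocks both vanish, with off-diagonal block $\wt N\ceq\pi^{(1)}_C(N^C_{\lambda,\mu})$ of size $l\times m$. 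Such a matrix has Pfaffian $0$ unless $l=m$ — which holds precisely when $\ell(\lambda)-\ell(\mu)\le 1$ — giving the first case; and when $l=m$ its Pfaffian equals $(-1)^{\binom{l}{2}}\det\wt N$, so reversing the order of the columns of $\wt N$ (another factor $(-1)^{\binom{l}{2}}$) and recalling $Q^C_{(r)}(x_i)=q^C_r(x_i)$ (with $q^C_0(x_i)=1$ and $q^C_r(x_i)=0$ for $r<0$) turns it into $\det\bigl[Q^C_{(\lambda_l-\mu_m)}(x_i)\bigr]_{l,m=1}^{\ell(\lambda)}$. The statement for $Q^A$ is proved identically with $\pi^{(1)}_A$ in place of $\pi^{(1)}_C$.

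\smallskip
The main obstacle is the single-variable vanishing $\pi^{(1)}_C(Q^C_{(r,s)})=0$. Its type-$A$ analogue $\pi^{(1)}_A(Q^A_{(r,s)})=0$ is immediate: with $q^A_0(x_j)=1$ and $q^A_r(x_j)=2x_j^r$ for $r\ge1$, the recursion \eqref{eq:iQ:QArec} collapses to $4x_j^{r+s}\bigl(1+2\sum_{c=1}^{s-1}(-1)^c+(-1)^s\bigr)=0$. For $Q^C_{(r,s)}$ one must instead substitute the explicit value $q^C_r(x_i)=2\bigl([r]+[r-2]\bigr)$ from part (1), where $[m]\ceq x_i^m+x_i^{m-2}+\dotsb+x_i^{-m}$, into the more elaborate recursion for $Q^C_{(r,s)}$ in \eqref{eq:iQ:QC-M}; because of the extra inner alternating sum there, the cancellation becomes a lengthier (but still routine) telescoping — alternatively this single-variable evaluation can be quoted from \cite[\S3]{OsQ}. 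Everything else is formal bookkeeping with Pfaffians and chains of partitions.
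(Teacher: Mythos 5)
Your proposal is correct, and it diverges from the paper's proof in a way worth recording. For part (2) the paper argues in one line: it decomposes a tableau $T \in \QT^{(k,n-k)}(\lambda/\mu)$ into the subtableaux formed by the letters attached to each single variable; your iterated variable-splitting is the algebraic form of exactly that decomposition, and it rests on the same external input (that the single-variable weight generating functions are $\pi^{(1)}_C\bigl(Q^C_{\mu^{(i)}/\mu^{(i-1)}}\bigr)$ and $\pi^{(1)}_A\bigl(Q^A_{\mu^{(j)}/\mu^{(j-1)}}\bigr)$, i.e.\ \cite[Lemma 4.4]{OsQ} and \cite[III.8]{M95}), so here you and the paper essentially coincide; note also that your first display, with $Q^C$ attached to the inner shape $\nu/\mu$, matches the tableau-sum formula \eqref{eq:iQ:Qp} and the paper's own proof of \cref{prp:iQ:iQ-T} (the printed \cref{dfn:iQ:iQf} has $\lambda/\nu$ and $\nu/\mu$ the other way round; that discrepancy is the paper's, not yours). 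For part (1) the paper argues directly from one-row tableaux, each unprimed letter contributing a geometric series and each primed letter a factor $1+x z$, whereas you deduce (1) from (2) via chains of one-row shapes and the single-variable series for $q^C_r$ and $q^A_r$; both are valid, yours making the factorization over variables transparent at the cost of a logical dependence of (1) on (2). For part (3) the paper simply cites \cite[Lemma 4.4 (2), (3)]{OsQ} for the type-$C$ half and declares the type-$A$ half analogous; you instead re-derive the statement from the Pfaffian definitions \eqref{eq:iQ:QClm} and \eqref{eq:iQ:QAlm}: after applying $\pi^{(1)}_C$ the diagonal blocks vanish, the Pfaffian of such a matrix is zero unless the off-diagonal block is square (which is exactly the condition $\ell(\lambda)-\ell(\mu)\le 1$ after padding) and otherwise equals $(-1)^{\binom{l}{2}}$ times its determinant, and the column reversal contributes the compensating sign $(-1)^{\binom{l}{2}}$ — this bookkeeping is correct and yields precisely the asserted determinant. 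The only ingredient you still owe is the single-variable vanishing $\pi^{(1)}_C\bigl(Q^C_{(r,s)}\bigr)=0$ for $r>s>0$ (your explicit check of the type-$A$ analogue is right); since that vanishing is itself part of \cite[Lemma 4.4]{OsQ}, quoting it leaves you no less self-contained than the paper, while carrying out the telescoping with $q^C_r(x)=2([r]+[r-2])$ would make your part (3) fully independent of \cite{OsQ}, which the paper's proof is not.
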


\begin{proof}
\begin{enumerate}
\item 
We denote the left hand side by $q(z)$, and take $l \in \bbN$ and $T \in \QT^{(k,n-k)}\bigl((l)\bigr)$. Then for each $i=1,\dotsc,n$, the letter $i'$ can appear at most once in $T$ by the condition (QT3) in \cref{dfn:iQ:QT}, 
and the letter $i$ can appear in arbitrary times. Thus the variable $x_i$ contributes 
$(1+x_i z)(1+x_i z+x_i^2z^2+\dotsb)=(1+x_i z)/(1-x_i z)$ to the generating series $q(z)$.
Similarly, for each $j \in \{1,\dotsc,k\}$, 
the letter $\ol{j}'$ can appear at most once in $T$ by the condition (QT4) in \cref{dfn:iQ:QT}, 
and the letter $\ol{j}$ can appear in arbitrary times. Thus the variable $x_j$ contributes
$(1+x_j^{-1} z)(1+x_j^{-1} z+x_j^{-2}z^2+\dotsb)=(1+x_j^{-1} z)/(1-x_j^{-1} z)$ to $q(z)$.
Hence we have the claim.
\item
We obtain the formula by decomposing a tableau $T \in \QT^{(k,n-k)}(\lambda/\mu)$ in the definition \eqref{eq:iQ:Qp} into the subtableaux consisting of $i',i,\ol{i}',\ol{i}$ for $i=1,\dotsc,k$, and of $j',j$ for $j=k+1,\dotsc,n$.
\item
The first half of the statement is shown in \cite[Lemma 4.4, (2), (3)]{OsQ}. The second half can also be shown by a similar argument in loc.\ cit. 
\end{enumerate}
\end{proof}

Below is an intermediate analogue of the J\'ozefiak-Pragacz type formulas \eqref{eq:iQ:QAlm} and \eqref{eq:iQ:QClm}.

\begin{thm}\label{thm:iQ:iJP}
Let $k,n \in \bbN$ satisfy $k \le n$, and $\lambda=(\lambda_1,\dotsc,\lambda_l)$ and $\mu=(\mu_1,\dotsc,\mu_m)$ be strict partitions satisfying $\lambda \supset \mu$, $\lambda_l>0$, $\mu \ge 0$, $l \ge 2$ and $l+m$ even. Then we have
\[
 Q^{(k,n-k)}_{\lambda/\mu}(x_1,\dotsc,x_n) = \Pf
 \begin{bmatrix} M^I_\lambda & N^I_{\lambda,\mu} \\ -\trs{N^I_{\lambda,\mu}} & O \end{bmatrix},
\]
where $M^I_\lambda \ceq \bigl[ Q^{(k,n-k)}_{(\lambda_i,\lambda_j)}(x_1,\dotsc,x_n) \bigr]_{i,j=1}^l$ and $N^I_{\lambda,\mu} \ceq \bigl[Q^{(k,n-k)}_{(\lambda_i-\mu_{m+1-j})}(x_1,\dotsc,x_n)\bigr]_{1 \le i \le l, 1 \le j \le m}$.
\end{thm}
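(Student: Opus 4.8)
**The plan is to realize $Q^{(k,n-k)}_{\lambda/\mu}$ as a weighted count of families of non-intersecting lattice paths, and then apply the Lindström--Gessel--Viennot (LGV) lemma** to obtain the Pfaffian. This follows the strategy indicated in the introduction: one builds a directed graph $\Gamma^{(k,n-k)}$ (the hint points to Figure~\ref{fig:iQ:LGV}) which is a concatenation of a ``type $C$'' part (responsible for the variables $x_1,\dotsc,x_k$) and a ``type $A$'' part (responsible for $x_{k+1},\dotsc,x_n$), mimicking Stembridge's graph for Schur's $Q$-functions \cite[\S6]{S} and Okada's graph for the symplectic $Q$-functions \cite{OsQ}. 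The first step is to make precise the path model: a single path from a source $u_r$ at height $r$ to a ``free'' sink should have generating function $Q^{(k,n-k)}_{(r)}(x_1,\dotsc,x_n)$, matching the product formula in \cref{lem:iQ:tab}(1); a path from $u_r$ to a sink $v_s$ at height $s$ should have generating function $Q^{(k,n-k)}_{(r,s)}$ (up to the antisymmetrization inherent in the LGV sign); and the edges/weights are chosen so that the step statistics reproduce the entries $\{1'<1<\ol{1}'<\ol{1}<\dotsb\}$ of \cref{dfn:iQ:QT}. The key input making this consistent is precisely \cref{lem:iQ:tab}: part~(2) says a tableau of shape $\lambda/\mu$ factors through a chain $\mu=\mu^{(0)}\subset\dotsb\subset\mu^{(n)}=\lambda$ with the $i$-th step weighted by $Q^C_{\mu^{(i)}/\mu^{(i-1)}}(x_i)$ or $Q^A_{\mu^{(j)}/\mu^{(j-1)}}(x_j)$, and part~\ref{i:lem:iQ:tab:3} says each such one-variable skew factor is either $0$ (if a step grows by more than one row) or a determinant of one-row pieces $Q^C_{(r)}(x_i)$, $Q^A_{(r)}(x_j)$; this is exactly the local structure that a lattice-path step can encode.

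\textbf{Step-by-step:} First I would set up $\Gamma^{(k,n-k)}$ explicitly, with $l$ sources $u_1,\dotsc,u_l$ (heights $\lambda_1>\dotsb>\lambda_l$), $m$ ``bounded'' sinks $w_1,\dotsc,w_m$ (heights $\mu_m<\dotsb<\mu_1$), and a family of ``free'' sinks absorbing the remaining $l-m$ paths, and verify the two-point generating-function identities: the generating function for a path $u_i\to w_j$ is $Q^{(k,n-k)}_{(\lambda_i-\mu_{m+1-j})}(x)$ (this is the entry of $N^I_{\lambda,\mu}$), the generating function for a pair of paths $u_i,u_j\to$ free sinks, antisymmetrized, is $Q^{(k,n-k)}_{(\lambda_i,\lambda_j)}(x)$ (the entry of $M^I_\lambda$), and a single free path gives $Q^{(k,n-k)}_{(\lambda_i)}$, consistent with the convention $Q^{(k,n-k)}_{(\lambda_i,0)}=Q^{(k,n-k)}_{(\lambda_i)}$ used implicitly via $\mu\ge 0$ and $l+m$ even. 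Second, I would invoke the tableau-sum formula \eqref{eq:iQ:Qp} together with \cref{lem:iQ:tab}(2),\ref{i:lem:iQ:tab:3} to show that $Q^{(k,n-k)}_{\lambda/\mu}(x)$ equals the signed sum over \emph{all} families of paths from $\{u_i\}$ to $\{w_j\}\cup\{\text{free sinks}\}$ — the determinantal one-variable factors supply exactly the local sign/weight bookkeeping so that intersecting configurations either cancel or are excluded. Third, I would apply the LGV/Stembridge Pfaffian lemma: non-intersecting families are counted with the Pfaffian of the matrix whose blocks are the pairwise free-sink connector $M^I_\lambda$ and the source-to-bounded-sink connector $N^I_{\lambda,\mu}$, with the zero block $O$ coming from the fact that two bounded sinks cannot both be reached non-trivially (equivalently $Q^{(k,n-k)}_{(\text{something})}$-free). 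Finally I would check the parity/size conventions ($l+m$ even, $l\ge 2$, the $\mu_m=0$ padding) so that the matrix has even size and the Pfaffian makes sense, and confirm the boundary cases $k=0$ and $k=n$ recover \eqref{eq:iQ:QAlm} and \eqref{eq:iQ:QClm} respectively (a sanity check, since those are the known J\'ozefiak--Pragacz and Okada formulas).

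\textbf{Main obstacle:} The hard part will be the combinatorial core — constructing $\Gamma^{(k,n-k)}$ so that the concatenation of the type-$C$ segment and the type-$A$ segment is ``compatible'' at the seam (height $k$), i.e.\ that a non-intersecting family over the whole graph restricts to non-intersecting families over each segment and that the weights multiply correctly according to \cref{lem:iQ:tab}(2). In the pure type $C$ case this is Okada's analysis in \cite{OsQ} and in pure type $A$ it is Stembridge's in \cite{S}; the genuinely new work is verifying that gluing two such graphs end-to-end does not create spurious intersections or lose the vanishing that forces the $O$ block, which amounts to checking that the transition $Q^C_{\nu/\mu}(x_1,\dotsc,x_k)$ then $Q^A_{\lambda/\nu}(x_{k+1},\dotsc,x_n)$ over an intermediate strict partition $\nu$ with $\mu\subset\nu\subset\lambda$ is faithfully modeled by paths crossing the seam. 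Once the two-point functions and the seam-compatibility are established, the Pfaffian identity is a formal consequence of the LGV lemma, so I would budget most of the effort on drawing the graph carefully and proving the non-intersecting$\leftrightarrow$tableau bijection; the rest is bookkeeping with the conventions already fixed in \cref{dfn:iQ:iQf}--\cref{dfn:iQ:QT}.
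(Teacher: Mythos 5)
Your proposal follows essentially the same route as the paper: one builds the directed graph $\Gamma^{(k,n-k)}$ by stacking the type-$C$ region (rows $\le 2k$, carrying the letters $i,\ol{i},i',\ol{i}'$) on the type-$A$ region (rows $>2k$), proves a bijection between non-intersecting path families from the sources at heights $\lambda_i$ to the bounded sinks at heights $\mu_j$ together with a free boundary and the tableaux of \cref{dfn:iQ:QT}, and then applies Stembridge's Pfaffian variant of the Lindstr\"om--Gessel--Viennot theorem, with the two-point and one-point path generating functions identified with the entries of $M^I_\lambda$ and $N^I_{\lambda,\mu}$ exactly as you indicate. The only cosmetic difference is that the paper establishes the path--tableau correspondence directly by labelling the edges of $\Gamma^{(k,n-k)}$ rather than routing through \cref{lem:iQ:tab}, and then invokes the compatibility hypothesis of Stembridge's Theorem 3.2, which is what your ``seam-compatibility'' check amounts to.
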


\begin{proof}
We use an ``intermediate'' modification of Stembridge's variation \cite[\S6]{S} of the Lindstr\"{o}m-Gessel-Viennot theorem \cite{GV,L}, and make a similar argument as \cite[Theorem 6.2]{S}. Let us consider the directed graph $\Gamma^{(k,n-k)}$ shown in the left of \cref{fig:iQ:LGV}.  The vertex set $V$ is given by $V= U \cup I$ with 
\[
 U \ceq \{(x,y) \in \bbN^2 \mid x \ge 1, \ y \le k+n\}, \quad 
 I \ceq \{(0,y) \mid y \in \thf\bbN, \ y \le k+n\}
\]
and the edge set $E$ is given by $E \ceq H \cup \ol{H} \cup H' \cup P \cup D \cup \ol{D} \cup D' \cup D_0 \cup \ol{D}_0 \cup D_0'$ with 
\begin{align*}
     H &\ceq \{(i-1,j)   \to (i,j) \mid (i-1,j),(i,j) \in V, \ j \in 2\bbN+1, \ j \le 2k \}, \\
 \ol{H}&\ceq \{(i-1,j)   \to (i,j) \mid (i-1,j),(i,j) \in V, \ j \in 2\bbN,   \ j \le 2k \}, \\
     H'&\ceq \{(i-1,j)   \to (i,j) \mid (i-1,j),(i,j) \in V, \ j > 2k\}, \\
     P &\ceq \{(i,j-1)   \to (i,j) \mid (i,j-1),(i,j) \in V\}, \\
     D &\ceq \{(i-1,j-1) \to (i,j) \mid (i-1,j-1),(i,j) \in U, \ j \in 2\bbN+1, \ j \le 2k\}, \\
 \ol{D}&\ceq \{(i-1,j-1) \to (i,j) \mid (i-1,j-1),(i,j) \in U, \ j \in 2\bbN,   \ j \le 2k\}, \\
     D'&\ceq \{(i-1,j-1) \to (i,j) \mid (i-1,j-1),(i,j) \in U, \ j > 2k\}, \\
   D_0 &\ceq \{(0,j-\thf)\to (1,j) \mid (0,j-\thf),(1,j) \in V, \ j \in 2\bbN+1, \ j \le 2k\}, \\
\ol{D}_0&\ceq\{(0,j-\thf)\to (1,j) \mid (0,j-\thf),(1,j) \in V, \ j \in 2\bbN,   \ j \le 2k\}, \\
   D_0'&\ceq \{(0,j-\thf)\to (1,j) \mid (0,j-\thf),(1,j) \in V, \ j > 2k\}.
\end{align*}
Note that we distinguish the regions $j \le 2k$ and $j>2k$.

We denote by $\clP(s,t)$ for $s,t \in V$ the set of path from $s$ to $t$ in the directed graph. For a subset $S \subset V$, we denote $\clP(S,t) \ceq \bigcup_{s \in S}\clP(s,t)$. 
Now we claim that there is a bijection between the tableaux $\QT^{(k,n-k)}_{\lambda/\mu}$ 
and the following set $\clP^{(k,n-k)}_0(\lambda/\mu)$ of non-intersecting lattice paths.
Let us define the vertices $u_i \ceq (\lambda_i,k+n)$ for $i=1,\dotsc,l$, 
and $v_j \ceq (\mu_j,0)$ for $j=1,\dotsc,m$. Then 
\begin{align*}
 \clP^{(k,n-k)}_0(\lambda/\mu) \ceq \{(P_1,\dotsc,P_l) \mid \text{non-intersecting}, \ 
  P_i \in \clP(v_i,u_i) \text{ for } i \le m, \ 
  P_i \in \clP(I,u_i) \text{ for } i>m\}.
\end{align*}

\begin{figure}[H]
\begin{minipage}{0.50\textwidth}
\centering
\begin{tikzpicture}
\draw [dotted,step=0.5cm] (-1.49,-1.49) grid (2.0,2.8);
\foreach \x in {0.0,0.5,1.0,1.5,2.0,2.5,3.0}
 \draw [dotted] (-1.5,-0.75+\x) -- (-1.0,-0.5+\x);
\foreach \x in {0.0,0.5,1.0,1.5,2.0,2.5}
 \draw [dotted] (-1.0,-0.50+\x) -- (2.0-\x, 2.5);
\foreach \x in {0.0,0.5,1.0,1.5,2.0,2.5,3.0}
 \draw [dotted] (-1.50+\x,-1.5) -- (2.0, 2.0-\x);
\draw (-1.0, 2.5) circle (0.5mm) node (u5) [above] {$u_5$};
\draw (-0.5, 2.5) circle (0.5mm) node (u4) [above] {$u_4$};
\draw ( 1.0, 2.5) circle (0.5mm) node (u3) [above] {$u_3$};
\draw ( 1.5, 2.5) circle (0.5mm) node (u2) [above] {$u_2$};
\draw ( 2.0, 2.5) circle (0.5mm) node (u1) [above] {$u_1$};
\draw (-1.0,-1.5) circle (0.5mm) node (v3) [below] {$v_3$};
\draw ( 0.5,-1.5) circle (0.5mm) node (v2) [below] {$v_2$};
\draw ( 1.5,-1.5) circle (0.5mm) node (v1) [below] {$v_1$};
\draw (-1.5, 2.0) circle (0.5mm); 
\draw (-1.5,-0.75)circle (0.5mm); 
\draw[->] (1.5,-1.45)--(1.5,-1.0); 
\foreach \x in {0.5,1.0,1.5,2.0}
 \draw[->] (1.5,-1.5+\x)--(1.5,-1.0+\x); 
\draw[->] (1.5,1.0)--(2.0,1.5); \draw (1.9,1.35) node [below] {$\ol{3}'$}; 
\draw[->] (2.0,1.5)--(2.0,2.0); \draw[->] (2.0,2.0)--(2.0,2.45);
\draw[->] (0.5,-1.45)--(0.5,-1.0); 
\foreach \x in {0.5,1.0}
 \draw[->] (0.5,-1.5+\x)--(0.5,-1.0+\x); 
\draw[->] (0.5,0.0)--(1.0,0.0); \draw (0.75,0.0) node [below] {$2$}; 
\foreach \x in {0.0,0.5,1.0}
 \draw[->] (1.0,0.0+\x)--(1.0,0.5+\x);
\draw[->] (1.0,1.5)--(1.5,1.5); \draw (1.25,1.5) node [above] {$\ol{3}$}; 
\draw[->] (1.5,1.5)--(1.5,2.0); \draw[->] (1.5,2.0)--(1.5,2.45); 
\draw[->] (-0.95,-1.45)--(-0.5,-1.0); \draw (-0.6,-1.15) node [below] {$1'$}; 
\foreach \x in {0.0,0.5,1.0,1.5}
 \draw[->] (-0.5,-1.0+\x)--(-0.5,-0.5+\x);
\draw[->] (-0.5,1.0)--(0.0,1.0); \draw (-0.25,1.0) node [below] {$3$};  
\draw[->] ( 0.0,1.0)--(0.5,1.0); \draw ( 0.25,1.0) node [below] {$3$};  
\foreach \x in {0.0,0.5,1.0}
 \draw[->] (0.5,1.0+\x)--(0.5,1.5+\x);
\draw[->] (0.5,2.5)--(.95,2.5); \draw (0.75,2.5) node [below] {$5$}; 
\draw[->] (-1.45,-0.73)--(-1.0,-0.5); \draw (-1.2,-0.6) node [below] {$\ol{1}'$}; 
\foreach \x in {0.0,0.5,1.0,1.5} 
 \draw[->] (-1.0,-0.5+\x)--(-1.0,0.0+\x);
\draw[->] (-1.0,1.5)--(-0.5,2.0); \draw (-0.65,1.85) node [below] {$4'$};  
\draw[->] (-0.5,2.0)--(-0.5,2.45);
\draw[->] (-1.45,2.0)--(-1.0,2.0); \draw (-1.25,2.0) node [below] {$4$};  
\draw[->] (-1.0,2.0)--(-1.0,2.45);
\fill (-1.5,-1.5) circle (0.5mm) node [left] {$(0,0)$};
\fill (-1.5, 1.5) circle (0.5mm) node [left] {$(0,2k)$};
\fill (-1.5, 2.5) circle (0.5mm) node [left] {$(0,n+k)$};
\end{tikzpicture}
\end{minipage}
\begin{minipage}{0.15\textwidth}
\centering
\ytableausetup{mathmode,smalltableaux}
\begin{ytableau} 
 {} & & & & & & *(gray) \\ 
 \none  & & & & & *(gray) & *(gray) \\ 
 \none  & \none & & *(gray) & *(gray) & *(gray) & *(gray) \\
 \none  & \none & \none & *(gray) & *(gray) \\
 \none  & \none & \none & \none   & *(gray)
\end{ytableau}
\end{minipage}
\hspace{0.05\textwidth}
\begin{minipage}{0.25\textwidth}
\centering
\ytableausetup{mathmode,nosmalltableaux}
\begin{ytableau} 
 {} & & & & & & \ol{3}' \\ 
 \none & & & & & 2 & \ol{3} \\ 
 \none & \none & & 1' & 3 & 3 & 5 \\
 \none & \none & \none & \ol{1}' & 4' \\
 \none & \none & \none & \none & 4
\end{ytableau}
\end{minipage}
\caption{Non-intersecting lattice paths in $\Gamma^{(k,n-k)}$ and the corresponding tableaux}
\label{fig:iQ:LGV}
\end{figure}
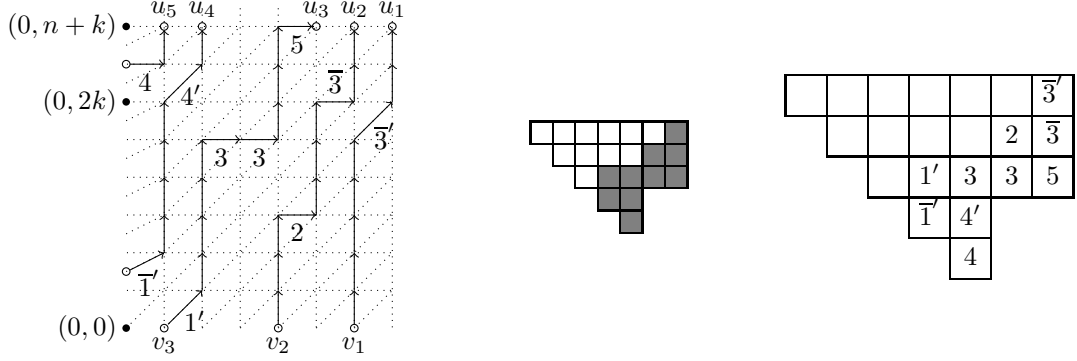

The bijection $\clP_0(\lambda/\mu) \to \QT^{(k,n-k)}_{\lambda/\mu}$ is given as follows. On each edge $e \in E$, we put the letter $l(e) \in \{1'<1<\ol{1}'<\ol{1}<\dotsb<\ol{k}<(k+1)'<k+1<\dotsb<n'<n\}$ (see \eqref{eq:iQ:k-n}) by the rule
\begin{align*}
 e \in H                  \colon &l\bigl(* \to (i,j)\bigr) \ceq \tfrac{j+1}{2}, & 
 e \in D \cup D_0         \colon &l\bigl(*\to(1,j)\bigr) \ceq (\tfrac{j+1}{2})', \\
 e \in \ol{H}             \colon &l\bigl(* \to (i,j)\bigr) \ceq \ol{\tfrac{j}{2}},  & 
 e \in \ol{D}\cup\ol{D}_0 \colon &l\bigl(*\to(1,j)\bigr) \ceq \ol{\tfrac{j}{2}}', \\
 e \in H'                 \colon &l\bigl(* \to (i,j)\bigr) \ceq j-2k, & 
 e \in D'                 \colon &l\bigl(* \to (1,j)\bigr) \ceq (j-2k)', 
\end{align*}
and  $l(e) \ceq \emptyset$ for $e \in P$. 
Then, given $(P_1,\dotsc,P_l) \in \clP^{(k,n-k)}_0(\lambda/\mu)$, we fill the first row of the skew diagram $S(\lambda/\mu)$ by the letters of the edges on $P_1$ from left to right, fill the second row by the letters on $P_2$, and so on. In \cref{fig:iQ:LGV}, we show an example of $(P_1,\dotsc,P_5) \in \clP_0^{(k,n-k)}(\lambda/\mu)$ with $k=3$, $n=2$, $\lambda=(7,6,5,2,1)$ and $\mu=(6,4,1)$. The shifted skew diagram $S(\lambda/\mu)$ consists of the gray boxes in the middle figure, and the tableau corresponding to the paths $(P_1,\dotsc,P_5)$ is shown in the right figure. 

Using the terminology in \cite{S}, we find that the set of vertices $\{u_1,\dotsc,u_l\}$ is $\Gamma^{(k,n-k)}$-compatible with the union $\{v_1,\dotsc,v_m\} \cup I$, and we can apply the Pfaffian formula in \cite[Theorem 3.2]{S} to obtain the statement.
\end{proof}

\begin{rmk}
We devised the direct graph $\Gamma^{(k,n-k)}$ as a $Q$-function analogue of \cite[Figure 2.1 in Proof of Proposition 2.3]{Oisp}, which was used by Okada to derive the Jacobi-Trudi type identity of intermediate symplectic Schur polynomials. In the case $k=0$, $\Gamma^{(0,n)}$ is nothing but the directed graph $D$ used by Stembridge in \cite[Theorem 6.2]{S} to show the J\'ozefiak-Pragacz Pfaffian formula of skew Schur's $Q$-polynomial. In the case $k=n=1$, $\Gamma^{(1,0)}$ is the directed graph $G$ used by Okada in  \cite[Fig.\ 1 in Proof of Lemma 4.4]{OsQ} to prove the determinant formula of skew symplectic $Q$-polynomial, which we already cited in the proof of \cref{lem:iQ:tab} \ref{i:lem:iQ:tab:3}.
\end{rmk}

%

\section{Concluding remarks and questions}

At this moment, we only know the properties of intermediate symplectic $Q$-polynomials given in \cref{lem:iQ:tab} and \cref{thm:iQ:iJP}. We expect several other properties from those for Schur's and symplectic $Q$-polynomials. Below we list them in the form of open questions.

\begin{q}
Is there a good formula of $Q^{(k,n-k)}_\lambda(x_1,\dotsc,x_n)$ with $\ell(\lambda)=2$?
Combined with \eqref{eq:iQ:l=1} and \cref{thm:iQ:iJP}, it will yield a recursive definition of $Q^{(k,n-k)}_\lambda$ for general $\lambda$. 
\end{q}

\begin{q}
Schur's $Q$-function $Q^A_\lambda(x_1,\dotsc,x_n)$ can be presented as a ratio of Pfaffians, known as Nimmo's formula \cite[(A13)]{N}. A similar formula is shown for symplectic $Q$-functions by Okada \cite[Proposition 2.2]{OsQ}. Is there a Nimmo-type formula for intermediate symplectic $Q$-functions?
\end{q}

\begin{q}
Okada established in \cite{OgPQ} a theory of generalized Schur's $P$- and $Q$-functions. Actually, the theory of symplectic $Q$-functions in \cite{OsQ} is based on that theory. Also, this theory can be regarded as a $Q$-function analogue of Macdonald's ninth variation of Schur polynomials \cite{M9}. Is it possible to further generalize the theory to include our intermediate symplectic $Q$-functions? Such a framework will give answers to the above questions automatically.
\end{q}

\begin{q}
We expect that the situation is quite simplified in the case $n-k=1$, i.e., when the number of type $A$ variable is one. Some of the questions above might be attacked in this case.
\end{q}

\begin{Ack}
We would like to thank Professor Soichi Okada for the explanation of the directed graphs and non-intersecting lattice paths used in \cite{OsQ,Oisp}, which helped the author to devise the graph $\Gamma^{(k,n-k)}$ in \cref{fig:iQ:LGV}.
\end{Ack}

\end{document}